\tikzset{inner sep=0pt}
\newtheorem{observation}{Observation}
\newtheorem{theorem}{Theorem}
\newtheorem{remark}{Remark}
\newtheorem{lemma}{Lemma}
\newtheorem{definition}{Definition}
\newtheorem{corollary}{Corollary} 
\newcommand{\floor}[1]{\left\lfloor #1\right \rfloor}
\newcommand{\ceil}[1]{\left\lceil #1 \right\rceil}
\title{Maximum Rectilinear Crossing Number of Uniform Hypergraphs}
\author[1]{Rahul Gangopadhyay\thanks{rahulg@iiitd.ac.in}}
\author[2]{Ayan\thanks{saifk@iiitd.ac.in}}
\affil[1]{Laboratory of Combinatorial and Geometric Structures,\\Moscow Institute of Physics and Technology, Dolgoprudny, Moscow Region, 141700, Russian Federation.}
\affil[2]{ IIIT Delhi, New Delhi, India.}
\begin{document}
\maketitle

\begin{abstract}

We improve the lower bound on the $d$-dimensional rectilinear
crossing number of the complete $d$-uniform hypergraph having
$2d$ vertices  to $\Omega\left(\dfrac{(4\sqrt{2}/3^{3/4})^d}{d}\right)$ from $\Omega(2^d \sqrt{d})$. We also establish that the $3$-dimensional rectilinear
crossing number of a complete $3$-uniform hypergraph having
$n \geq 9$ vertices is at least $\dfrac{43}{42}\dbinom{n}{6}$.\\

We prove that the maximum number of crossing pairs of hyperedges in a $4$-dimensional
rectilinear drawing of the complete $4$-uniform hypergraph having $n$ vertices is $13\dbinom{n}{8}$.
We also prove that among all 
$4$-dimensional rectilinear drawings of a complete
$4$-uniform hypergraph having $n$ vertices,
the number of crossing pairs of hyperedges is maximized if
all its vertices are placed at the vertices of a $4$-dimensional
neighborly polytope.
Our result proves the conjecture by Anshu et al. [Anshu, Gangopadhyay, Shannigrahi, and Vusirikala, 2017] for $d=4$.

We  prove that the maximum $d$-dimensional rectilinear
crossing number of a complete $d$-partite $d$-uniform balanced
hypergraph is $(2^{d-1}-1){\dbinom{n}{2}}^d$. \par
We then prove that finding the maximum $d$-dimensional
rectilinear crossing number of an arbitrary $d$-uniform
hypergraph is NP-hard.

We give a randomized scheme to create
a $d$-dimensional rectilinear drawing of a $d$-uniform hypergraph
$H$ such that, in expectation the total number of crossing pairs of hyperedges 
is a constant fraction of the maximum $d$-dimensional rectilinear
crossing number of $H$.

\smallskip
\noindent \textbf{Keywords.} Complete Uniform Hypergraphs;
   $d$-Partite $d$-Uniform Hypergraphs; Rectilinear Crossing Number;
   Neighborly Polytope; Gale Transform; Order Type
\end{abstract}

\section{Introduction}
In a topological drawing of a graph $G$ in $\mathbb{R}^2$, vertices are mapped as points in $\mathbb{R}^2$ and edges are drawn as simple Jordan curves, so that any pair of edges cross each other at most once, and adjacent edges do not cross each other. In a topological drawing of a graph, no edge passes through a vertex apart from its endpoints and no two edges touch each other (except crossing). Also, no three edges pass through a common crossing point in a topological drawing of a graph. A rectilinear drawing of a graph in $\mathbb{R}^2$ represents its vertices as points in general position, i.e., no three points are colinear, and its edges are straight line segments connecting the corresponding vertices. 
In a rectilinear  drawing of a graph, a pair of edges are said to be \emph{crossing} if they are vertex-disjoint and contain a common point in their relative interiors.
The rectilinear crossing number of a graph $G$, denoted by $\overline {cr}(G)$, is the minimum number of crossing pairs of edges among all rectilinear drawings of the graph. 
A convex drawing of a graph $G$ is a rectilinear drawing with its vertices in  convex position in $\mathbb{R}^2$. 
There are other variants of graph crossing numbers which are comprehensively discussed in~\cite{SCH}. 
Most of the problems on crossing numbers deal with the minimization of the number of crossings among all possible drawings of the graph.\par

The maximum crossing number of a graph $G$, denoted by \emph{max-}$cr(G)$, is the maximum number of crossing pairs of edges among all  topological drawings of $G$ in which no three distinct edges cross in one point and any pair of edges share a common point if and only if they are adjacent or they cross each other, see~\cite{SCH}. Ringel~\cite{RIN} introduced the maximum rectilinear crossing number for a graph $G$, being the maximum number of crossing pairs of edges among all rectilinear drawings of $G$. Let us denote the maximum rectilinear crossing number for a graph $G$ by \emph{max-}$\overline {cr}(G)$. 
Verbitsky~\cite{VER} gave an approximation algorithm, which in expectation provides a $1/3$ approximation guarantee on the maximum rectilinear crossing number problem.
The study also showed that the maximum rectilinear crossing number of a planar graph having $n$ vertices is less than $3n^2$.
Bald et al.~\cite{BLD} de-randomized Verbitsky's algorithm and showed that it is NP-hard to find the maximum rectilinear crossing number of an arbitrary graph. Chimani et al.~\cite{CHI} showed that the maximum rectilinear crossing number of an arbitrary graph can not be approximated better than \textit{Max-Cut} problem thus proving it to be APX-hard. In the same paper~\cite{CHI}, they proved that  finding \emph{max-}$cr(G)$ is NP-complete for an arbitrary graph $G$. \par

A hypergraph, a natural generalization of a graph, is defined as an ordered pair $(V, E)$ where $V$ is the set of vertices and $E \subseteq 2^V \setminus \{\emptyset\}$ is the set of hyperedges.
A hypergraph is said to be $d$-uniform if each hyperedge contains exactly $d$ vertices.
Let $K_n^d$ denote the \emph{complete $d$-uniform hypergraph} having $n$ vertices and $\dbinom{n}{d}$ hyperedges.
We can partition the vertex set of a \emph{$d$-uniform $d$-partite} hypergraph into $d$ disjoint parts such that each of the $d$ vertices in each hyperedge belongs to a different part, and it is balanced if each of the parts has the same number of vertices.
A balanced $d$-uniform $d$-partite hypergraph having $n$ vertices in each part is complete if it has all $n^d$ hyperedges and it is denoted by $K_{d\times n}^d$.
Dey and Pach~\cite{DP} extended the idea of a rectilinear drawing of a graph to a rectilinear drawing of a hypergraph. 
Consider a set $P$ having $n \geq d+1$ points in $\mathbb{R}^d$. 
The points in $P$ are said to be in general position if no set of $d+1$ points of $P$ lie on a $(d-1)$-dimensional hyperplane.  Let us  denote  the convex hull of  $P$ by  $Conv(P)$.

In a \emph{$d$-dimensional rectilinear drawing} of a $d$-uniform hypergraph $H$, the vertices of $H$ are placed in general position in $\mathbb{R}^d$ and the hyperedges are drawn as the convex hull of $d$ corresponding vertices, i.e. $(d-1)$-simplices~\cite{AS}.
In a $d$-dimensional rectilinear drawing of $H$, a pair of hyperedges are said to \emph{cross} each other if they are vertex-disjoint and contain a common point in their relative interiors~\cite{AS,DP}.
The \emph{$d$-dimensional rectilinear crossing number} of $H$, denoted by $\overline {cr}_d(H)$, is the minimum number of crossing pairs of hyperedges among all $d$-dimensional rectilinear drawings of $H$~\cite{AS}.
Dey and Pach~\cite{DP} proved that $H$ can have at most $O(n^{d-1})$ hyperedges if $\overline {cr}_d(H)=0$. The first non-trivial lower bound of $\Omega(2^d \log d / \sqrt{d})$ on $\overline {cr}_d(K_{2d}^d)$ was proved by Anshu and Shannigrahi~\cite{AS}.    Anshu et al.~\cite{AGS} proved that $\overline {cr}_d(K_{2d}^d)= \Omega(2^d)$ with the bound being later improved to $\Omega(2^d \sqrt{d})$~\cite{GS}. Gangopadhyay et al.~\cite{GS1} proved that $\overline {cr}_d\left(K_{d \times n}^d\right)= \Omega(2^d)(n/2)^d ((n-1)/2)^d$ for $n \geq 3$ and sufficiently large $d$. \par

The points in $P$ are in convex position if none of these points can be expressed as a convex combination of the remaining points in $P$. 
A \emph{$d$-dimensional convex drawing} of a $d$-uniform hypergraph $H$ is a $d$-dimensional rectilinear drawing of it where all its vertices are in convex position as well as in general position in $\mathbb{R}^d$.
In this paper, we define the \emph{maximum $d$-dimensional rectilinear crossing number} of a $d$-uniform hypergraph $H$, denoted by \emph{max-}$\overline {cr}_d(H)$, as the maximum number of crossing pairs of hyperedges among all $d$-dimensional rectilinear drawings of $H$.\par

Consider a set $P$ having $n\geq d+1$ points in convex position in $\mathbb{R}^d$.
Let us assume that the affine hull of the points in $P$ is the entire space $\mathbb{R}^d$. The convex hull of $P$, denoted by
$Conv(P)$, is a $d$-dimensional convex polytope, and the points in $P$ are the vertices of $Conv(P)$. 
A $d$-dimensional convex polytope is \emph{$k$-neighborly} if any subset of its vertex set containing at most $k$ vertices spans a face.
A $d$-dimensional convex polytope can be at most $\floor{d/2}$-neighborly unless it is a $d$-simplex. 
A $d$-dimensional $\floor{d/2}$-neighborly polytope is called $d$-dimensional \emph{neighborly polytope}. 
The \emph{$d$-dimensional moment curve} $\gamma_d$ is defined as~$\gamma_d= \{(t,t^2, \ldots, t^d): t \in~\mathbb{R}\}$.
Let $p_i=(t_i,t_i^2, \ldots, t_{i}^d)$ and $p_j=(t_j,t_j^2, \ldots, t_{j}^d)$ be two points on $\gamma_d$.
We say that the point $p_i$ precedes the point $p_j$ ($p_i \prec p_j$) if $t_i < t_j$. The $d$-dimensional cyclic polytope is an example of a $d$-dimensional neighborly polytope where all of its vertices are placed on $\gamma_d$.\par

Anshu et al.~\cite{AGS} proved that placing all the vertices of  $K_{2d}^d$ at the vertices of a $d$-dimensional cyclic polytope gives rise to a particular $d$-dimensional rectilinear drawing of $K_{2d}^d$ having $c_d^m$ crossing pairs of hyperedges, where $c_d^m$ is defined as follows:\\

\[c_d^m= \begin{cases} 
   
\dbinom{2d-1}{d-1}-\sum\limits_{i=1}^{d/2}\dbinom{d}{i}\dbinom{
d-1} {i-1} & \textrm{if~$d$~is~even},\\
\\
\dbinom{2d-1}{d-1}-1-\sum\limits_{i=1}^{\floor{d/2}}\dbinom{d-1}{i}\dbinom{
d}{i} & \textrm{if~$d$~is~odd}.\\ 
  \end{cases}\\
\]

In~\cite{AGS}, it was conjectured that the maximum number of crossing pairs of hyperedges in a $d$-dimensional convex drawing of $K_{2d}^d$ is $c_d^m$ for each $d \geq 2$.
It is trivially true for $d = 2$, since any convex drawing of the complete graph $K_n$ produces $\dbinom{n}{4}$ pairs of crossing edges. 
In~\cite{AGS}, the authors also proved that a $3$-dimensional rectilinear drawing of $K_{6}^3$ can have at most three crossing pairs of hyperedges, implying that $K_{n}^3$ can have at most $3\dbinom{n}{6}$ crossing pairs of hyperedges in any of its $3$-dimensional rectilinear drawings.
They also showed that any $3$-dimensional convex drawing of $K_{6}^3$ has three crossing pairs of hyperedges.\par

Note that we need at least $2d$ vertices to form a crossing pair of hyperedges since they need to be vertex-disjoint, and each set of $2d$ vertices creates distinct crossing pairs of hyperedges. 
If placing the vertices of $K_{2d}^d$ on $\gamma_d$ maximizes the number of crossing pairs of hyperedges among all  its $d$-dimensional rectilinear drawings, then \emph{max-}$\overline {cr}_d(K_{n}^d)= c_d^m \dbinom{n}{2d}$ since every set of $2d$ vertices on $\gamma_d$ spans $c_d^m$ crossing pairs of hyperedges.
Using these definitions and notations, we describe our contributions in this paper.\par

\subsection{Our Contributions}

In Section~\ref{sec1}, we  improve the lower bound on $\overline {cr}_3(K_{n}^3)$ for $n \geq 9$.  In particular, we prove the following results.

\begin{theorem}
\label{thm00}
For $n \geq 9$, $\overline {cr}_3(K_{n}^3) \geq \dfrac{43}{42}\dbinom{n}{6}$.
\end{theorem}

In Section~\ref{sec11}, we improve the lower bound on $\overline {cr}_d(K_{2d}^d)$ to $\Omega\left(\dfrac{(4\sqrt{2}/3^{3/4})^d}{d}\right)$.

\begin{theorem}
\label{thm0}
For sufficiently large $d$, $\overline {cr}_d(K_{2d}^d)= \Omega\left(\dfrac{(4\sqrt{2}/3^{3/4})^d}{d}\right)$ which is approximately $\Omega\left(\dfrac{2.481^d}{d}\right)$.
\end{theorem}

In Section~\ref{sec3}, we prove the conjecture proposed by Anshu et al.~\cite{AGS} for $d=4$.

\begin{theorem}
\label{thm1}
For $n \ge 8$, \emph{max-}$\overline {cr}_4(K_{n}^4)= 13 \dbinom{n}{8}$.
\end{theorem}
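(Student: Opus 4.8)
The plan is first to localize the problem to eight vertices. Since any two crossing hyperedges are vertex-disjoint $3$-simplices, a crossing pair is determined by the $8$ vertices it uses, and any $8$ vertices of a $4$-dimensional rectilinear drawing of $K_n^4$ are themselves in general position and hence realize a drawing of $K_8^4$. Thus in any drawing the total number of crossings is at most $\binom{n}{8}\,\overline{max-cr}_4(K_8^4)$, and no crossing is double counted because distinct crossings occupy distinct $8$-sets. On the other hand, placing all $n$ vertices on the moment curve $\gamma$ gives every $8$-subset the same order type, so by the result of Anshu et al.\ each contributes exactly $c_m^4$ crossings; evaluating the even-$d$ formula at $d=4$ gives $c_m^4=\binom{7}{3}-\binom{4}{1}\binom{3}{0}-\binom{4}{2}\binom{3}{1}=35-4-18=13$. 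Hence the theorem reduces to the single statement $\overline{max-cr}_4(K_8^4)=13$, and since the moment-curve drawing already yields $13$, it remains only to prove the upper bound $\overline{max-cr}_4(K_8^4)\le 13$.

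For the upper bound I would pass to the Gale transform. Fix any drawing of $K_8^4$, i.e.\ eight points $p_1,\dots,p_8$ in general position in $\mathbb{R}^4$, and let $v_1,\dots,v_8\in\mathbb{R}^{8-4-1}=\mathbb{R}^3$ be their Gale transform. Writing $\hat p_i=(p_i,1)$, a balanced partition $\{A,B\}$ with $|A|=|B|=4$ yields a crossing precisely when there are coefficients $c_i$, strictly positive on $A$ and strictly negative on $B$, giving an affine dependence $\sum_i c_i\hat p_i=0$ (strictness of all $c_i$ encodes that the common point lies in the relative interiors of both simplices). Since the space of affine dependences is exactly $\{(\langle v_1,x\rangle,\dots,\langle v_8,x\rangle):x\in\mathbb{R}^3\}$, this happens iff some linear hyperplane $x^{\perp}$ through the origin places all of $\{v_i:i\in A\}$ strictly on its positive side and all of $\{v_i:i\in B\}$ strictly on its negative side. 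General position of the $p_i$ makes the $v_i$ generic (every three linearly independent), and full-dimensionality of the configuration forces the $v_i$ to positively span $\mathbb{R}^3$, so no central hyperplane leaves all eight on one side. Consequently the crossing number of the drawing equals the number of unordered $4$-$4$ bipartitions of $v_1,\dots,v_8$ that are strictly separated by a central hyperplane.

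The heart of the argument is therefore a purely combinatorial bound: eight generic, positively spanning vectors in $\mathbb{R}^3$ admit at most $26$ balanced central separations, equivalently at most $13$ unordered ones. I would analyze the central arrangement of the eight hyperplanes $v_i^{\perp}$, equivalently the arrangement of eight great circles on $S^2$; genericity makes it simple, so by Euler's formula it has $2+n(n-1)=58$ regions, antipodally paired and graded by the number of positive signs (the level). The positive-spanning property removes the two extreme levels, and the goal is to show that the number $N_4$ of level-$4$ regions satisfies $N_4\le 26$, using the identity $2(N_1+N_2+N_3)+N_4=58$ together with lower bounds on the outer levels coming from extreme vectors of the positive hull and an exchange/rotation argument as the separating hyperplane turns. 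I expect this middle-level estimate to be the main obstacle: for $d=3$ the corresponding objects are mere lines through the origin in $\mathbb{R}^2$ and the count is immediate, whereas here one must control the middle level of a great-circle arrangement. If a uniform argument proves elusive, the fallback is to enumerate the finitely many uniform rank-$3$ oriented matroids on eight elements that are totally cyclic and verify directly that $13$ (attained by the Gale diagram of the moment curve) is maximal.
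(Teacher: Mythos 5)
Your reduction to $K_8^4$ is sound (crossing pairs are vertex-disjoint, so they partition according to the $8$-set they span), your evaluation $c_m^4=13$ is correct, and your Gale-transform translation --- crossings in a drawing of $K_8^4$ correspond to unordered strict central separations of $8$ generic, positively spanning vectors in $\mathbb{R}^3$ into two $4$-sets --- is exactly the framework the paper uses (its Lemma~3). However, the decisive step is missing. Your proposed ``uniform'' argument proves nothing yet: the Euler-formula identity $2(N_1+N_2+N_3)+N_4=58$ merely restates the goal, since $N_4\le 26$ is equivalent to $N_1+N_2+N_3\ge 16$, and positive spanning by itself only excludes the levels $0$ and $8$; you give no argument for this lower bound on the outer levels, and you explicitly flag it as the main obstacle. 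Note also that the extremal configuration (the Gale diagram of eight moment-curve points, an alternately signed octagon) attains $N_1+N_2+N_3=16$ with equality, so any such level estimate must be exactly tight --- which is presumably why no clean counting argument is known and why the paper does not attempt one.

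What the paper actually does is, in effect, your fallback, carried out in full: it passes to affine Gale diagrams (planar point sets with $4$ red and $4$ blue points), observes that crossings correspond to balanced $2$-partitions plus balanced $4$-partitions plus the one separation used to form the diagram, and then exhaustively checks, by computer, all $3315$ order types of $8$ points in general position (from the Aichholzer et al.\ database) under all $\binom{8}{4}=70$ balanced colorings, finding that the maximum of (balanced $2$-partitions $+$ balanced $4$-partitions) is $12$, attained by an alternately colored convex octagon, hence at most $13$ separations. (Your oriented-matroid version of the fallback would be equivalent here, since every uniform rank-$3$ oriented matroid on $8$ elements is realizable, but you do not carry it out either.) As written, the proposal contains a correct strategy and a correct reduction, but no proof of the upper bound $\overline{max\text{-}cr}_4(K_8^4)\le 13$, which is the heart of the theorem.
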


It is natural to ask the maximum $d$-dimensional rectilinear crossing number of $K^d_{d \times n}$.
In Section~\ref{sec4}, we prove the following result.

\begin{theorem}
\label{thm2}
For $n \ge 2$ and $d \ge 2$, \emph{max-}$\overline {cr}_d(K^d_{d \times n})= (2^{d-1}-1) {\dbinom{n}{2}}^d$.
\end{theorem}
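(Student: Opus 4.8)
The plan is to reduce the global count to a purely local statement about a single group of $2d$ vertices, two from each part, and then prove the two matching inequalities by different tools. First I would count the vertex-disjoint pairs of hyperedges: two hyperedges of $K^d_{d\times n}$ are vertex disjoint iff they use different vertices in every one of the $d$ parts, so such a pair uses exactly two vertices per part. Choosing the two used vertices in each part gives $\binom{n}{2}^d$ selections, and a fixed selection of $2d$ vertices admits exactly $2^{d-1}$ unordered pairs of disjoint hyperedges (assign, in each part, one vertex to each hyperedge, giving $2^d$ ordered assignments, halved for the unordered pair). Hence the number of potentially crossing pairs is $2^{d-1}\binom{n}{2}^d$, and $\overline{max-cr}_d(K^d_{d \times n}) = (2^{d-1}-1)\binom{n}{2}^d$ will follow once I establish that (i) in every drawing each selection yields at most $2^{d-1}-1$ crossing pairs, and (ii) some drawing makes every selection yield exactly $2^{d-1}-1$.

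For the upper bound (i) I would show that every selection always has at least one non-crossing pairing. Given the $2d$ vertices, treat the two chosen vertices of each part as a two-point set; applying the Ham-Sandwich theorem to these $d$ point sets in $\mathbb{R}^d$ yields a hyperplane $H$ that bisects each of them, placing one vertex of every part strictly on each side (the degenerate cases where $H$ meets a vertex are eliminated by the general-position hypothesis together with a small perturbation of $H$). The rainbow pairing that sends the vertices on the positive side of $H$ to one hyperedge and those on the negative side to the other then consists of two $(d-1)$-simplices separated by $H$, so they do not cross. Thus at most $2^{d-1}-1$ of the $2^{d-1}$ pairings cross, and summing over all $\binom{n}{2}^d$ selections gives the upper bound.

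For the lower bound (ii) I would place all $dn$ vertices on the moment curve $\gamma$, assigning the parts as consecutive blocks, so that part $i$ occupies the $i$-th block of $n$ consecutive vertices along $\gamma$. Then, for any selection, the two chosen vertices of each part appear grouped together along $\gamma$, with part $i$ in positions $2i-1, 2i$. Here I would invoke the crossing criterion for the moment curve: two vertex-disjoint $(d-1)$-simplices spanned by $2d$ points of $\gamma$ cross iff the membership sequence, recording in increasing-parameter order which simplex each point belongs to, has at least $d+1$ sign changes. Encoding a pairing by $x\in\{0,1\}^d$, where $x_i$ records the orientation of the block of part $i$, a short computation shows that the total number of sign changes equals $d$ plus the number of indices $i$ with $x_i=x_{i+1}$; hence a pairing is non-crossing iff $x$ is alternating, and the two alternating strings are global flips of one another and so represent a single pairing. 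Therefore exactly one pairing per selection fails to cross and the remaining $2^{d-1}-1$ cross, producing a drawing with $(2^{d-1}-1)\binom{n}{2}^d$ crossings and matching the upper bound.

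The main obstacle is the crossing criterion on the moment curve. The direction "few sign changes implies no crossing" is a clean Descartes-type argument: a nonzero affine dependence $\sum_i c_i(1,t_i,\dots,t_i^d)=0$ must change sign at least $d+1$ times, since otherwise a polynomial of degree at most $d$ matching the signs of $c$ would contradict the vanishing of the moments $\sum_i c_i t_i^k=0$ for $k=0,\dots,d$. The converse, namely that $d+1$ or more sign changes forces a crossing, is the statement the lower bound actually needs, and it requires exhibiting an affine dependence with a prescribed sign pattern, i.e. controlling the circuits of the cyclic polytope $C(2d,d)$; I would either cite this from \cite{AGS} or derive it through Gale duality of the cyclic polytope. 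A secondary technical point is the degeneracy handling in the Ham-Sandwich step. Once the local value $2^{d-1}-1$ is pinned down from both sides, the theorem follows by multiplying by the number $\binom{n}{2}^d$ of selections.
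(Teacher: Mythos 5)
Your proposal is correct and follows essentially the same route as the paper: the same reduction to $\binom{n}{2}^d$ groups of $2d$ vertices with $2^{d-1}$ pairings each, an upper bound from the existence of one non-crossing rainbow pairing per group (the paper cites the Akiyama--Alon disjoint-simplices theorem, whose proof is exactly your Ham--Sandwich argument), and a matching lower bound from the identical moment-curve drawing with parts as consecutive blocks, analyzed via the Breen/Dey--Pach crossing criterion. Your sign-change count ($d$ plus the number of adjacent equal bits, so non-crossing iff the pairing alternates) is a cleaner equivalent of the paper's bucket analysis, but the substance is the same.
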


In Section~\ref{sec5}, we prove that for $d \geq 3$, finding the maximum $d$-dimensional rectilinear crossing number of an arbitrary $d$-uniform hypergraph is NP-hard.
Since this problem is NP-hard, we propose a randomized approximation algorithm, which in expectation gives a constant $\tilde{c_d}$ approximation guarantee on the maximum $d$-dimensional rectilinear crossing number problem. 
The constant $\tilde{c_d}$ is dependent on $d$. In particular, we prove the following results.

\begin{theorem}
\label{thm3}
 For $d \geq 3$, finding the maximum $d$-dimensional rectilinear crossing number of an arbitrary $d$-uniform hypergraph is NP-hard.
\end{theorem}

\begin{theorem}
\label{lemrandom}
Let $H=(V,E)$ be a $d$-uniform hypergraph for $d \geq 3$. Let $F$ be the total number of pairs of vertex-disjoint hyperedges. There exists a $d$-dimensional rectilinear drawing $D$ of $H$ such that there are at least $\tilde{c_d} \cdot F$ crossing pairs of hyperedges in $D$, where $\tilde{c_d}$ is a constant.
\end{theorem}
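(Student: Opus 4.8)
The plan is to use the probabilistic method: I will draw $H$ at random, show that the expected number of crossing pairs of hyperedges equals $\tilde{c_d}\cdot F$ for a constant $\tilde{c_d}=\tilde{c_d}(d)>0$, and conclude that some outcome realizes at least this many crossings.

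First I would fix generic reals $x_1<x_2<\cdots<x_n$ and choose a uniformly random bijection $\sigma\colon V\to\{1,\dots,n\}$, placing each vertex $v$ at the point $\gamma(x_{\sigma(v)})$ on the $d$-dimensional moment curve $\gamma$. Because any $d+1$ distinct points of $\gamma$ are affinely independent, every such placement is automatically in general position, so this is a legitimate $d$-dimensional rectilinear drawing of $H$ for every $\sigma$.

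Next I would analyze one vertex-disjoint pair $\{e_1,e_2\}\subseteq E$ in isolation. Its $2d$ vertices occupy $2d$ of the available positions, and by the symmetry of $\sigma$ the $d$ positions receiving the vertices of $e_1$ form a uniformly random $d$-subset $S$ of these $2d$ positions, with $e_2$ occupying the complement. The key point is that whether $e_1$ and $e_2$ cross depends only on the linear order of their $2d$ points along $\gamma$ together with the partition into the two hyperedges: any $2d$ points of $\gamma$ span a cyclic polytope whose combinatorial (order-type) structure is determined solely by the ordering of the parameters, and ``crossing'' is an invariant of that order type. Hence the probability that $\{e_1,e_2\}$ crosses equals the fraction of balanced partitions of $2d$ points on $\gamma$ that cross. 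Since, by the count of Anshu et al.\ quoted above, exactly $c_m^d$ of the $\tfrac12\dbinom{2d}{d}$ unordered balanced partitions cross, this probability is $\tilde{c_d}:=\dfrac{2\,c_m^d}{\binom{2d}{d}}$ --- the same positive constant (note $c_m^d\ge 1$) for every vertex-disjoint pair, independent of $n$.

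Finally, summing over all $F$ vertex-disjoint pairs by linearity of expectation gives an expected crossing count of exactly $\tilde{c_d}\cdot F$, so some realization $D$ of $\sigma$ has at least $\tilde{c_d}\cdot F$ crossing pairs, proving the statement. Because $\overline {max-cr}_d(H)\le F$, the same drawing is within a factor $\tilde{c_d}$ of the optimum. The step I expect to require the most care is justifying the order-type invariance: one must verify that the crossing/non-crossing status of a balanced partition of points on $\gamma$ is unchanged under any order-preserving reparametrization (so that $c_m^d$ applies uniformly and $\tilde{c_d}$ truly does not depend on $n$ or on the chosen pair), which rests on the combinatorial equivalence of all cyclic polytopes $C(2d,d)$.
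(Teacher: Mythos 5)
Your proposal is correct and follows essentially the same route as the paper: a uniformly random placement of the vertices on the $d$-dimensional moment curve, a per-pair crossing probability computed from the Anshu et al.\ count $c_m^d$, and linearity of expectation; your constant $\tilde{c_d}=\frac{2c_m^d}{\binom{2d}{d}}$ coincides with the paper's $\frac{c_m^d}{\binom{2d-1}{d-1}}$, since $\binom{2d-1}{d-1}=\frac{1}{2}\binom{2d}{d}$. The only cosmetic difference is that you compute the probability via a uniformly random $d$-subset of positions (justified by order-type invariance on the moment curve, which is indeed what Lemmas \ref{lem:dpach} and \ref{lem:dpach1} provide), whereas the paper counts permutations directly as $\frac{2(d!)^2c_m^d}{(2d)!}$.
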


\section{Improved Lower Bound on $\overline {cr}_3(K_{n}^3)$}\label{sec1}

In this section, we improve the lower bound on the $3$-dimensional rectilinear crossing number of $K_n^3$ for $n\geq 3$.  We state two lemmas that are used to improve the lower bound on the $3$-dimensional rectilinear crossing number of $K_n^3$ for $n \geq 9$.

\begin{lemma}~\cite{BISZ} \label{Erdos-szekers} 
Every set of nine points in general position in $\mathbb{R}^3$ contains a subset of six points that are in convex position. 
\end{lemma}
Note that Lemma~\ref{Erdos-szekers} is the $3$-dimensional analogue of the Erd\H{o}s-Szekeres theorem. 

\begin{lemma}~\cite{AGS} \label{Conv3d}
The number of crossing pairs of hyperedges in a $3$-dimensional rectilinear drawing of $K_6^3$ is at least one.
The number of crossing pairs of hyperedges in a $3$-dimensional rectilinear drawing of $K_6^3$ is three if its vertices are in convex as well as general position in $\mathbb{R}^3$.
\end{lemma}

\noindent\textbf{Proof of Theorem~\ref{thm00}.} Let $V=\{v_1,v_2, \ldots, v_{9}\}$ denote the set of nine points corresponding to the vertices of $K_{9}^3$ in any of its $3$-dimensional rectilinear drawings.
Lemma~\ref{Erdos-szekers} implies that in such a $3$-dimensional rectilinear drawing of $K_{9}^3$ there exist six points which are in general as well as convex position in $\mathbb{R}^3$. 
Let us consider the sub-hypergraph $H$ of $K_{9}^3$ induced by the six vertices corresponding to these points. 
Note that $H$ is isomorphic to $K_{6}^3$. 
Lemma~\ref{Conv3d} implies that $H$ contains three crossing pairs of hyperedges. 
Also, note that there are $\dbinom{9}{6}$ distinct sub-hypergraphs of $K_{9}^3$ which are isomorphic to $K_{6}^3$.
Lemma~\ref{Conv3d} also implies that each of these $\dbinom{9}{6}$ distinct sub-hypergraphs contains at least one crossing pair of hyperedges, and one of them, i.e., $H$ contains three crossing pairs of hyperedges. 
Also, note that the crossing pairs of hyperedges spanned by any  set of six vertices are distinct from the crossing pairs of hyperedges spanned by any other set of six vertices. 
The total number of crossing pairs of hyperedges in a $3$-dimensional rectilinear drawing of $K_{9}^3$ is at least $\dbinom{9}{6}-1+3=86$. 
This implies that $\overline {cr}_3(K_{9}^3) \geq 86$. \par

Consider a $3$-dimensional rectilinear drawing of $K_{n}^3$ where $n\geq9$. 
Note that $K_n^3$ contains $\dbinom{n}{9}$ distinct induced sub-hypergraphs, each of which is isomorphic to $K_9^3$. 
Also, each crossing pair of hyperedges is contained in $\dbinom{n-6}{3}$ distinct induced sub-hypergraphs which are isomorphic to $K_9^3$. 
Using these two facts, we obtain that $\overline{cr}_3\left(K_{n}^3\right) \geq 86{\dbinom{n}{9}}\bigg /\dbinom{n-6}{3} = \dfrac{43}{42}\dbinom{n}{6}$. 
\begin{remark}

The currently best known lower and upper bounds on the rectilinear crossing number of a complete graph with $n$ vertices are $0.37997\dbinom{n}{4}+\Theta(n^3)$ and $0.3804491869 \dbinom{n}{4}+ \Theta(n^3)$, respectively~\cite{Alb,FML}.
It will be an interesting challenge to reduce the gap between the lower bound and the upper bound on $\overline {cr}_3(K_{n}^3)$ and make them very close to each other as it has been done in the case of the complete graph with $n$ vertices.\par

The exact values of rectilinear crossing number of complete graphs having $n$ vertices are known when $n \leq 27$~\cite{ABRGO}.
Similarly, determining the exact values of $\overline {cr}_3(K_{n}^3)$ for small values of $n$ can be an interesting direction of research. Also, determining the exact values of $\overline {cr}_3(K_{n}^3)$ for small values of $n$ can improve the general lower bound on $\overline {cr}_3(K_{n}^3)$. 

\end{remark}

\section{Gale Transform and Gale Diagram}\label{sec2}

We use the properties of the Gale transform~\cite{GL} and the affine Gale diagram to prove Theorems~\ref{thm0} and~\ref{thm1}.
In this section, we describe how to obtain a Gale transform and an affine Gale diagram of a  sequence of points and discuss their properties.  \par

Let $A=\langle a_1,a_2, \ldots, a_n \rangle$ be a sequence of $n$ points in $\mathbb{R}^d$ such that their affine hull is $\mathbb{R}^d$. 
The Gale transform of $A$, denoted by $D(A)$, is a sequence of $n$ vectors $\langle g_1,g_2, \ldots,g_n \rangle$ in $\mathbb{R}^{n-d-1}$. 
Let the coordinate of $a_i$ be $(x_1^i,x_2^i, \ldots, x_d^i)$.
Let us consider the following matrix $M(A)$:

\begin{center}
$
M(A) = 
\begin{bmatrix}
x_1^1&x_1^2&\cdots & x_1^n \\
x_2^1&x_2^2&\cdots & x_2^n \\
\vdots & \vdots & \vdots & \vdots\\ 
x_d^1&x_d^2&\cdots & x_d^n \\
1 & 1 & \cdots & 1
\end{bmatrix}
$.
\end{center}

Since at least $d+1$ points of $A$ are affinely independent, the dimension of the null space of $M(A)$ is $n-d-1$. 
Let $\{(b_1^1, b_2^1, \ldots, b_n^1), (b_1^2, b_2^2,$ $\ldots, b_n^2), \ldots, (b_1^{n-d-1},$ $b_2^{n-d-1}, \ldots, b_n^{n-d-1}) \}$ be a basis of it.
The vector $g_i$ in the sequence $D(A)$ of $n$ vectors is $g_i=(b_i^1$, $b_i^2$, $\ldots, b_i^{n-d-1})$.\par

A \emph{linear separation} of vectors in $D(A)$ is a partition of the vectors into $D^+(A)$ and $D^-(A)$ by a hyperplane passing through the origin. 
The opposite open half-spaces of the partitioning hyperplane contain the sets $D^+(A)$ and $D^-(A)$.
When $|D(A)|$ is even, a linear separation is called \emph{proper} if $|D^+(A)|= |D^-(A)|= |D(A)|/2$. 
Next, we state some interesting properties of the Gale transform of $A$. 

\begin{lemma}~\cite{JM}
\label{lem:conditiongale}
A sequence $\langle g_1,g_2, \ldots,g_n \rangle$ of $n$ vectors in $\mathbb{R}^{n-d-1}$ is a Gale transform of some  sequence of $n$ points in $\mathbb{R}^d$ if and only if the vectors in $\langle g_1,g_2, \ldots,g_n \rangle$ span $\mathbb{R}^{n-d-1}$ and $\sum_{i=1}^n g_i = \vv{\bm{0}}$. 
\end{lemma}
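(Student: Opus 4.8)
\subsection*{Proof proposal}

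The plan is to work directly from the matrix definition of the Gale transform recalled in the Appendix. Write $m = n-d-1$, and for each point $a_i$ form its homogenization $\hat a_i = (a_i, 1) \in \mathbb{R}^{d+1}$; let $M$ be the $(d+1)\times n$ matrix whose columns are $\hat a_1, \ldots, \hat a_n$. Since the affine hull of $A$ is all of $\mathbb{R}^d$, the homogenized points $\hat a_i$ span $\mathbb{R}^{d+1}$ linearly, so $M$ has full row rank $d+1$, and by rank--nullity its kernel has dimension $n-(d+1)=m$. By definition the Gale transform $D(A)$ is read off from any basis $v^{(1)},\ldots,v^{(m)}$ of $\ker M$: arranging these as the rows of an $m\times n$ matrix $G$, the column $g_i$ is the $i$-th Gale vector. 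Both directions of the equivalence will then follow from the duality between the row space of $G$ and the kernel of $M$ inside $\mathbb{R}^n$.

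For the forward direction, note that the last row of $M$ is the all-ones vector $\mathbf{1}$ because every $\hat a_i$ ends in $1$. Hence for each basis vector $v^{(j)}\in\ker M$ the last coordinate of the equation $Mv^{(j)}=\vv{\bm{0}}$ reads $\sum_{i=1}^n v^{(j)}_i=0$, and summing the Gale vectors coordinatewise gives $\sum_{i=1}^n g_i = \bigl(\sum_i v^{(1)}_i,\ldots,\sum_i v^{(m)}_i\bigr)=\vv{\bm{0}}$. Spanning is immediate: the rows of $G$ are linearly independent since they form a basis of $\ker M$, so $\mathrm{rank}(G)=m$, and therefore the columns $g_1,\ldots,g_n$ span $\mathbb{R}^m=\mathbb{R}^{n-d-1}$.

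For the converse I would reverse this construction. Given vectors $g_1,\ldots,g_n$ that span $\mathbb{R}^m$ and satisfy $\sum_i g_i=\vv{\bm{0}}$, let $G$ be the $m\times n$ matrix with these columns; spanning forces $\mathrm{rank}(G)=m$, so the row space of $G$ has dimension $m$ and its orthogonal complement $W\subseteq\mathbb{R}^n$ has dimension $n-m=d+1$. The key point is that the zero-sum hypothesis says exactly that $\mathbf{1}$ is orthogonal to every row of $G$, since the $j$-th inner product is $\sum_i (g_i)_j=\bigl(\sum_i g_i\bigr)_j=0$; thus $\mathbf{1}\in W$. I can then choose a basis of $W$ with last vector $\mathbf{1}$ and take it as the rows of a $(d+1)\times n$ matrix $M$, so that reading the first $d$ coordinates of each column as $a_i\in\mathbb{R}^d$ yields points whose homogenization is exactly the columns of $M$. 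As the rows of $M$ are independent, $\mathrm{rank}(M)=d+1$, which is equivalent to the affine hull of $\{a_i\}$ being all of $\mathbb{R}^d$. Finally $\ker M = W^\perp = \mathrm{rowspace}(G)$ by construction, so taking the rows of $G$ themselves as the basis of $\ker M$ recovers precisely $\langle g_1,\ldots,g_n\rangle$ as the Gale transform of $A$.

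The main obstacle is the converse, and specifically the two bookkeeping points that make it go through: guaranteeing that the homogenizing all-ones row is available (this is exactly where the condition $\sum_i g_i=\vv{\bm{0}}$ is used, via $\mathbf{1}\perp\mathrm{rowspace}(G)$), and being careful that the Gale transform is only defined up to the choice of kernel basis, so that equality with the given sequence is obtained by the canonical choice of using the rows of $G$ themselves. Everything else reduces to the linear-algebra duality between a subspace of $\mathbb{R}^n$ and its orthogonal complement.
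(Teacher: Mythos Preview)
The paper does not give its own proof of this lemma: it is quoted verbatim from Matou\v{s}ek~\cite{JM} and used as a black box, so there is no in-paper argument to compare against. That said, your proof is correct and is essentially the standard argument one finds in \cite{JM}: the forward direction reads off $\sum_i g_i=\vv{\bm{0}}$ from the all-ones row of the homogenized matrix and spanning from the rank of $G$, and the converse reconstructs the point set by taking the orthogonal complement of the row space of $G$ and using $\mathbf{1}\in W$ to supply the homogenizing row. Your handling of the two delicate points---that $\sum_i g_i=\vv{\bm{0}}$ is exactly the condition $\mathbf{1}\perp\mathrm{rowspace}(G)$, and that the Gale transform is defined only up to choice of kernel basis---is accurate.
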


\begin{definition}[Totally cyclic vector  configuration:]  A vector configuration $V=\{v_1,v_2, \ldots, v_n\}$  $\subset \mathbb{R}^d$ is said to be  \textit{totally cyclic}, if there exists a vector $\beta=(\beta_1, \beta_2, \ldots, \beta_n)$ in $\mathbb{R}^n$ such that each $\beta_i >0$ and $\beta_1v_1+ \beta_2v_2+ \ldots + \beta_nv_n= \vec{0}$. 
\end{definition}

Lemma~\ref{lem:conditiongale} implies that the Gale transform $D(A)$ of $A$ is a totally cyclic vector configuration, and there is a positive dependence among the vectors of $D(A)$. 
This also implies that there does not exist a hyperplane, passing through the origin, such that all the vectors of $D(A)$ lie on one side of the hyperplane ~\cite{ZIE}. 
Note that any totally cyclic vector configuration of $n$ vectors in $\mathbb{R}^{n-d-1}$ that span $\mathbb{R}^{n-d-1}$ can serve as a Gale transform of some point sequence having $n$ points in $\mathbb{R}^d$ after proper scaling.

\begin{lemma}~\cite{JM}
\label{lem:generalpos}
Every set of $n-d-1$ vectors of $D(A)$ span $\mathbb{R}^{n-d-1}$ if and only if the points in $A$ are in general position in $\mathbb{R}^{d}$.
\end{lemma}

\begin{lemma}~\cite{JM}
\label{bjection}
Consider a tuple $(i_1,i_2, \ldots, i_k)$, where $1 \leq i_1 < i_2< \ldots < i_k \leq n$.
The convex hull of $\{a_{i_1},a_{i_2}, \ldots, a_{i_k}\}$ crosses the convex hull of $A \setminus \{a_{i_1},a_{i_2}, \ldots, a_{i_k}\}$ if and only if there exists a linear separation of the vectors in $D(A)$ into $\{g_{i_1},g_{i_2}, \ldots, g_{i_k}\}$ and $D(A) \setminus \{g_{i_1},g_{i_2}, \ldots, g_{i_k}\}$. 
\end{lemma}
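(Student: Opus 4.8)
The plan is to reduce the geometric crossing condition to a sign condition on an affine dependence of the point set, and then to recognize that sign condition, via the defining duality of the Gale transform, as exactly a linear separation of the vectors $g_i$. Accordingly the proof splits into two equivalences that are spliced together at the end.

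First I would recall the construction of the Gale transform from the Appendix. Lift each $a_i$ to $\hat a_i=(a_i,1)\in\mathbb{R}^{d+1}$ and let $M$ be the $(d+1)\times n$ matrix with columns $\hat a_1,\dots,\hat a_n$. Since the affine hull of $A$ is $\mathbb{R}^d$, $M$ has rank $d+1$, so $\ker M$ has dimension $n-d-1$; the rows of the Gale matrix $G$ form a basis of $\ker M$, and the $g_i$ are its columns. The only fact I need from this is the duality: a vector $\lambda=(\lambda_1,\dots,\lambda_n)$ is an affine dependence of $A$, i.e.\ $\sum_i\lambda_i a_i=\vv{\bm 0}$ and $\sum_i\lambda_i=0$, if and only if $\lambda\in\ker M=\mathrm{rowspace}(G)$, equivalently if and only if there is a $y\in\mathbb{R}^{n-d-1}$ with $\lambda_i=\langle g_i,y\rangle$ for every $i$.

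Second, writing $S=\{a_{i_1},\dots,a_{i_k}\}$ and $T=A\setminus S$, I would establish the crossing characterization: $\mathrm{Conv}(S)$ crosses $\mathrm{Conv}(T)$, i.e.\ their relative interiors meet, if and only if there is an affine dependence $\lambda$ with $\lambda_i>0$ for every $a_i\in S$ and $\lambda_i<0$ for every $a_i\in T$. For the forward direction, a common point $x\in\mathrm{relint}(\mathrm{Conv}(S))\cap\mathrm{relint}(\mathrm{Conv}(T))$ is a strictly positive convex combination of the points of $S$ and, simultaneously, of the points of $T$ (the relative interior of a polytope is exactly the set of all-positive convex combinations of its generators); subtracting the two representations produces an affine dependence whose coordinates are strictly positive on $S$ and strictly negative on $T$. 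For the converse, given such a $\lambda$, I set $s=\sum_{i\in S}\lambda_i=-\sum_{i\in T}\lambda_i>0$ and normalize; the point $\tfrac1s\sum_{i\in S}\lambda_i a_i$ then lies in the relative interior of both hulls. Finally I would combine the two equivalences: by the duality, an affine dependence positive on $S$ and negative on $T$ is exactly a vector $y$ with $\langle g_i,y\rangle>0$ for $i\in S$ and $\langle g_i,y\rangle<0$ for $i\in T$, that is, the hyperplane through the origin with normal $y$ places $\{g_{i_1},\dots,g_{i_k}\}$ in one open half-space and $D(A)\setminus\{g_{i_1},\dots,g_{i_k}\}$ in the other — precisely a linear separation of $D(A)$ into the claimed parts.

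The main obstacle is the crossing characterization, and within it the relative-interior bookkeeping: I must justify that a point lies in $\mathrm{relint}(\mathrm{Conv}(S))$ precisely when it is a convex combination of all points of $S$ with strictly positive weights (even when the points of $S$ are affinely dependent), and then track signs carefully through the subtraction and normalization so that the resulting dependence vanishes nowhere and carries the correct strict signs on $S$ and $T$. General position (Lemma \ref{genposi}) guarantees the configuration is nondegenerate, so the separation obtained is strict with no $g_i$ lying on the hyperplane, matching the definition of a linear separation; the strictness of the signs, however, already falls out automatically from working with relative interiors.
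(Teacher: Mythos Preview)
The paper does not prove this lemma; it is quoted from Matou\v{s}ek's \emph{Lectures in Discrete Geometry} \cite{JM} and used as a black box. So there is no in-paper argument to compare against.

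Your proof is correct and is in fact the standard argument found in Matou\v{s}ek (and, in a slightly different packaging, in Ziegler \cite{ZIE}). The two key ingredients you identify---that $\ker M(A)$ is exactly the set of affine dependences and coincides with the row space of the Gale matrix, and that the relative interiors of $\mathrm{Conv}(S)$ and $\mathrm{Conv}(T)$ meet iff there is an affine dependence strictly positive on $S$ and strictly negative on $T$---are precisely how the result is proved in the reference. Your remark that the strict signs (hence the openness of the half-spaces in the linear separation) come for free from working with relative interiors is the right observation; the appeal to Lemma~\ref{genposi} is not actually needed for that step, though it does ensure no $g_i$ is the zero vector, so the separation is well-posed. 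One small wording caution: when you write that the relative interior of a polytope is ``exactly the set of all-positive convex combinations of its generators,'' state carefully that you mean the set of points admitting \emph{some} representation with all coefficients positive, since when the generators are affinely dependent a relative-interior point can also have representations with zero coefficients.
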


\begin{lemma}~\cite{JM}
\label{lem:facelemma}
For $t \leq d$, consider a tuple $(i_1,i_2, \ldots, i_t)$, where $1 \leq i_1 < i_2< \ldots < i_t \leq n$. A $t$-element subset $A'= \{a_{i_1},a_{i_2}, \ldots, a_{i_t}\} \subset A$ forms a $(t-1)$-dimensional face of $Conv(A)$ if and only if the relative interior of the convex hull of the points in $D(A)\setminus \{g_{i_1},g_{i_2}, \ldots, g_{i_t}\}$ contains the origin.
\end{lemma}

\begin{lemma}~\cite{GRU}\label{neigh}
Let the points in $A$ be in general as well as convex position in $\mathbb{R}^d$. A $d$-dimensional polytope formed by the convex hull of the points in $A$ is $t$-neighborly if and only if each of the linear separations of $D(A)$ contains at least $t+1$ vectors in each of the open half-spaces created by the corresponding linear hyperplane.
\end{lemma}

We consider the points in $A$ to be in general position.
Let $D(A)$ be a Gale transform of $A$.
Due to the general position of the points in $A$, Lemma~\ref{lem:generalpos} implies that none of the vectors in $D(A)$ is equal to the zero vector, i.e., $\forall i~g_i \neq \vec{0}$.

Consider a hyperplane $\bar{h}$ that is not parallel to any vector in $D(A)$ and not passing through the origin. We obtain an affine Gale diagram of $A$, denoted by $\overline{D(A)}$,  by the following way. 

For each $1\leq i \leq n$, we extend the vector $g_i \in D(A)$ either in the direction of $g_i$ or in its opposite direction until it cuts $\bar{h}$ at the point $\overline {g_i}$. 
We color $\overline {g_i}$ as \emph{red} (denoted as triangles in Figure~\ref{fig:311}) if the projection is in the direction of $g_i$, and \emph{violet} (denoted as squares in Figure~\ref{fig:311}) otherwise. 
The affine Gale diagram  $\overline{D(A)}$ of $A$ is the sequence of $n$ points $\langle \overline {g_1}, \overline {g_2}, \ldots, \overline {g_n} \rangle$ in $\mathbb{R}^{n-d-2}$ along with their respective colors.\par

We define a {\emph{separation}} of the points in $\overline{D(A)}$ as a partition of the points in $\overline{D(A)}$ in two disjoint sets of points $\overline{D^+(A)}$ and $\overline{D^-(A)}$ contained in the opposite open half-spaces created by a hyperplane.\\

\begin{figure}[h!]
\begin{center}
\includegraphics[scale = 0.17]{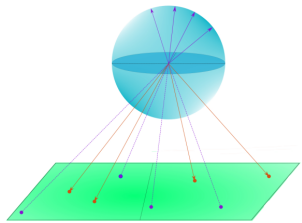}\\
\end{center}
\caption{An affine Gale diagram of 8 points in $\mathbb{R}^4$}
\label{fig:311}
\end{figure}

Let us define a \emph{balanced $2m$-partition} for a planar point set having an equal number of violet and red points in general position in $\mathbb{R}^2$. \\

\begin{definition}[Balanced $2m$-partition]
Let $T$ be a set of $n$ red and $n$ violet points in $\mathbb{R}^2$ such that all the $2n$ points are in general position.
A balanced $2m$-partition of $T$ is a partition of it into $\{X, T\setminus X\}$ such that the following properties hold.
\begin{itemize}
  \item The size of the set $X$ is $2m$.
  \item $X$ can be separated from $T\setminus X$ that contains the remaining $(2n-2m)$ points by a line not passing through any point of $T$.
  \item $X$ is balanced, i.e., it has an equal number of red and violet points.
\end{itemize}
 
\end{definition}

Since we are considering distinct balanced $2m$-partitions, i.e., the complementary pairs $\{X, T\setminus X\}$, we only consider them for $1 \leq m \leq \floor{n/2}$. 
We define a balanced $0$-partition to be a partition of $T$ into an empty set and $T$.
Note that there is only one balanced $0$-partition of a set.

\begin{definition}[$k$-partition]
Let $T$ be a set of $n$ red and $n$ violet points in $\mathbb{R}^2$ such that all the $2n$ points are in general position.
A $k$-partition of $T$ is a partition of it into $\{S, T\setminus S\}$ such that the size of the set $S \subseteq T$ is $k$ and $S$ can be separated from $T\setminus S$ by a line not passing through any point of $T$.
\end{definition}

\begin{definition}[Monochromatic $k$-partition]
Let $T$ be a set of $n$ red and $n$ violet points in $\mathbb{R}^2$ such that all the $2n$ points are in general position.
A monochromatic $k$-partition of $T$ is a partition of it into $\{Q, T\setminus Q\}$ such that the following properties hold.
\begin{itemize}
  \item The size of the set $Q \subseteq T$ is $k$.
  \item $Q$ can be separated from $T\setminus Q$ that contains  the remaining $(2n-k)$ points by a line not passing through any point of $T$.
  \item $Q$ is monochromatic, i.e., all the points in $Q$ are of the same color.
\end{itemize}
\end{definition}

\subsection{Properties of the Gale Diagram of Eight Points in $\mathbb{R}^4$ }

As already mentioned, in order to prove that \emph{max-}$\overline {cr}_4(K_{n}^4)= 13 \dbinom{n}{8}$, it is enough to show that placing the vertices of  $K_{8}^4$ on the $4$-dimensional moment curve maximizes the number of crossing pairs of hyperedges among all $4$-dimensional rectilinear drawings of it.\par

An affine Gale diagram maps a configuration of eight points in $\mathbb{R}^4$ to a configuration of eight points in a plane along with  colors associated with them such that some combinatorial information about the original point set is preserved.
We then analyse these planar point sets to prove the desired result. \par

Consider a set $A'$ of eight points in general position in $\mathbb{R}^4$.
Consider a Gale transform of $A'$, denoted by $D(A')$, which is a collection of eight vectors in $\mathbb{R}^3$.
Let us denote an affine Gale diagram of $A'$ by $\overline{D(A')}$.
We discuss a few properties of $\overline{D(A')}$ below.

\begin{observation}
\label{obs11}
There exists an affine Gale diagram $\overline{D(A')}$ of $A'$ having four red points and four violet points in $\mathbb{R}^2$ such that all the eight points are in general position.
\end{observation}

\begin{proof}
Consider a Gale transform $D(A')$ of $A'$, which is a set of eight vectors in $\mathbb{R}^3$. 
It is easy to note that there exists a $2$-dimensional hyperplane $\bar{h}$ passing through the origin that partitions $D(A')$ in two equal parts $D^+(A')$ and $D^-(A')$, each having four vectors.
Consider a hyperplane parallel to $\bar{h}$ and project the vectors in the above mentioned way to obtain an affine Gale diagram $\overline{D(A')}$ that has four violet points and four red points in $\mathbb{R}^2$, as shown in Figure~\ref{fig:311}. 
Also, note that the points in $\overline{D(A')}$ are in general position since no three of them are colinear.
If three points are colinear, it implies that the corresponding three vectors lie on a plane, which is a contradiction to Lemma~\ref{lem:generalpos} since the original points are in general position in $\mathbb{R}^4$. \qed
\end{proof}

\begin{observation}
\label{obs12}
Consider the affine Gale diagram $\overline{D(A')}$ having four red points and four violet points in $\mathbb{R}^2$ such that all the eight points are in general position.
The total number of proper linear separations (i.e., partition of eight vectors of $D(A')$ by a linear hyperplane into equal parts) in $D(A')$ is one more than the sum of the total number of balanced $2$-partitions of $\overline{D(A')}$ and the total number of balanced $4$-partitions of $\overline{D(A')}$.
\end{observation}

\begin{proof}

Consider any proper linear separation of vectors in $D(A')$ into $D^+(A')$ and $D^-(A')$. 
Note that this proper linear separation of vectors in $D(A')$ corresponds to a partition of points in $\overline{D(A')}$ into $\overline{D^+(A')}$ and $\overline{D^-(A')}$ by a line (this line is the intersection of the separating hyperplane with the hyperplane on which we projected the vectors to obtain the affine Gale Diagram).
Assume that there are $r$ red points and $b$ violet points in $\overline{D^+(A')}$.
This implies that there are $4-r$ red points and $4-b$ violet points in $\overline{D^-(A')}$. 
It is easy to note that the total number of vectors in $D^+(A')$ is equal to the number of red points in $\overline{D^+(A')}$ plus the number of violet points in $\overline{D^-(A')}$. 
This implies that $r+(4-b)$ is equal to $4$, and thus $r=b$.
This shows that each proper linear separation of vectors in $D(A')$ corresponds to a balanced $2m$-partition of $\overline{D(A')}$ for some $m$. 
Similarly, each balanced $2m$-partition of $\overline{D(A')}$ corresponds to a proper linear separation of vectors in $D(A')$.\par

The above argument shows that the total number of balanced $2$-partitions of $\overline{D(A')}$ plus the total number of balanced $4$-partitions of $\overline{D(A')}$ plus the balanced $0$-partition of $\overline{D(A')}$ is equal to the total number of proper linear separations in $D(A')$.
Note that we have not included a balanced $6$-partition since each balanced $6$-partition is the same as a balanced $2$-partition. 
Also, note that there is only one balanced $0$-partition of $\overline{D(A')}$. 
This balanced $0$-partition of $\overline{D(A')}$ corresponds to the proper linear separation of vectors in $D(A')$, which was used to obtain this affine Gale diagram.
This proves that total number of balanced $2$-partitions of $\overline{D(A')}$ plus the total number of balanced $4$-partitions of $\overline{D(A')}$ plus one is equal to the total number of proper linear separations in $D(A')$. \qed
\end{proof}

\begin{lemma}
\label{obs:13}
Consider the affine Gale diagram $\overline{D(A')}$ having four red points and four violet points in $\mathbb{R}^2$ such that all the eight points are in general position. Then, $\overline{D(A')}$ is an affine Gale diagram of a  $4$-dimensional neighborly polytope if and only if the following conditions hold.
\begin{itemize}
  \item Each $4$-partition in $\overline{D(A')}$ is a balanced $4$-partition.
  \item Each $2$-partition in $\overline{D(A')}$ is a balanced $2$-partition.
  \item There does not exist a monochromatic $3$-partition in $\overline{D(A')}$.
\end{itemize}
\end{lemma}
\begin{proof}

Lemma~\ref{neigh} implies that $D(A')$ is a Gale transform of a  $4$-dimensional neighborly polytope if and only if each of the linear separations of $D(A')$ contains at least three vectors in each of the open half-spaces created by the corresponding linear hyperplane.\par 

Consider any linear separation of vectors in $D(A')$ into $D^+(A')$ and $D^-(A')$.  Note that this linear separation of vectors in $D(A')$ corresponds to a partition of points in $\overline{D(A')}$ into $\overline{D^+(A')}$ and $\overline{D^-(A')}$ by a line (this line is the intersection of the separating hyperplane with the hyperplane on which we projected the vectors to obtain the affine Gale Diagram) and vice versa. \par 

It is easy to note that the total number of vectors in $D^+(A')$ is equal to the sum of the number of red points in $\overline{D^+(A')}$ and the number of violet points in $\overline{D^-(A')}$. 
Similarly, the total number of vectors in $D^-(A')$ is equal to the  sum of the number of red points in $\overline{D^-(A')}$ and the number of violet points in $\overline{D^+(A')}$. 

($\Rightarrow$) We first prove that if any of these three conditions mentioned above is violated, $D(A')$ is not a Gale transform of a  $4$-dimensional neighborly polytope having eight vertices.

\begin{description}

\item[Case 1.]

For the sake of contradiction, let us assume that there exists a $4$-partition in $\overline{D(A')}$ that is either monochromatic or contains three points of one color and one point of another color. 
Suppose it is monochromatic.
Then, this implies that there exists a linear hyperplane such that all the vectors of $D(A')$ lie in the same open half-space created by it, leading to a contradiction. 
Without loss of generality, let us assume that $\overline{D^+(A')}$ contains three points of one color and one point of the other color.
This implies that there exists a linear separation of $D(A')$ such that six vectors lie in the one side of the linear hyperplane and two vectors lie in the other side of the linear hyperplane.
Lemma~\ref{neigh} implies that $D(A')$ is not a Gale transform of a  $4$-dimensional neighborly polytope.
\item[Case 2.]
For the sake of contradiction, let us assume that the second condition is violated, i.e., there exists a monochromatic $2$-partition in $\overline{D(A')}$.
Without loss of generality, we assume that there exists a partition of points in $\overline{D(A')}$ into $\overline{D^+(A')}$ and $\overline{D^-(A')}$ by a line such that $\overline{D^+(A')}$ contains two points and both the points in $\overline{D^+(A')}$ are of the same color.
This implies that there exists a linear separation of $D(A')$ such that six vectors lie in the one side of the linear hyperplane and two vectors lie in the other side of the linear hyperplane, leading to a contradiction.

\item[Case 3.] 
For the sake of contradiction, we assume that there exists a monochromatic $3$-partition in $\overline{D(A')}$.
Without loss of generality, let us assume that $\overline{D^+(A')}$  contains three points having the same color.
This implies that there exists a linear separation of $D(A')$ such that seven vectors lie in the one side of the linear hyperplane and one vector lies in the other side of the linear hyperplane, leading to a contradiction.
\end{description}

($\Leftarrow$) In the following, we prove that if none of these three conditions is violated, any linear separation of $D(A')$ contains at least three vectors in each of the open half-spaces created by the corresponding linear hyperplane.
This implies that $D(A')$ is a Gale transform of a  $4$-dimensional neighborly polytope having eight vertices.

Note that for each linear separation of vectors in $D(A')$, there exists a partition of points in $\overline{D(A')}$ into $\overline{D^+(A')}$ and $\overline{D^-(A')}$.

Let us assume that $|\overline{D^+(A')}|=$ $|\overline{D^-(A')}|=4$.
Since each $4$-partition in $\overline{D(A')}$ is a balanced $4$-partition, any such partition corresponds to a proper linear separation of $D(A')$, as shown in the proof of Observation~\ref{obs12}.
Note that in any proper linear separation of $D(A')$ each open half-space contains four vectors of $D(A')$.

Let us assume that $|\overline{D^+(A')}|= 2$ and $|\overline{D^-(A')}|=6$.
Since each $2$-partition in $\overline{D(A')}$ is a balanced $2$-partition, $\overline{D^-(A')}$ is also balanced.
Any such partition corresponds to a proper linear separation of $D(A')$.

Let us assume that $|\overline{D^+(A')}|= 3$ and $|\overline{D^-(A')}|=5$.
Since none of the $3$-partitions in $\overline{D(A')}$ is monochromatic, $\overline{D^+(A')}$ contains two points having the same color and one point having another color.
Any such partition corresponds to a linear separation of $D(A')$ such that five vectors lie on one side of the linear hyperplane and three vectors lie on the other side of the linear hyperplane.

Let us assume that $|\overline{D^+(A')}|= 1$ and $|\overline{D^-(A')}|=7$.
Any such partition corresponds to a linear separation of $D(A')$ such that five vectors lie in the one side of the linear hyperplane and three vectors lie in the other side of the linear hyperplane.

There also exists a unique partition of $\overline{D(A')}$ into $\overline{D^+(A')}$ and $\overline{D^-(A')}$ where $|\overline{D^+(A')}|=0$.
As shown in the proof of Observation~\ref{obs12}, such a partition corresponds to a proper linear separation of $D(A')$. 

The above argument shows that any linear separation of $D(A')$ contains at least three vectors in each of the open half-spaces created by the corresponding linear hyperplane.
Lemma~\ref{neigh} implies that $D(A')$ is a Gale transform of a $4$-dimensional  neighborly polytope. \qed
\end{proof}

\section{Improved Lower Bound on $\overline {cr}_d(K_{2d}^d)$}\label{sec11}
In this section, we improve the lower bound on $\overline {cr}_d(K_{2d}^d)$ to $\Omega\left(\dfrac{(4\sqrt{2}/3^{3/4})^d}{d}\right)$. In order to improve the lower bound on $\overline {cr}_d(K_{2d}^d)$, we need the following lemmas and the upper bound theorem~\cite{MM}.

\begin{lemma}~\cite[Proof of Theorem~$1$]{GS}
\label{lemgs}
Let $C'$ be a set containing $d+4$ points in general position in $\mathbb{R}^d$ for sufficiently large $d$. There exist at least $\floor{(d+4)/2}$ pairs of disjoint subsets $\{C'_{i1}, C'_{i2}\}$ of $C'$ for each $i$ satisfying $1 \leq i \leq \floor{(d+4)/2}$ such that the following properties hold. 
\begin{enumerate}
  \item $C'_{i1} \cup C'_{i2}=C'$ and $|C'_{i1}| , |C'_{i2}| \geq \floor{(d+2)/2}$.
  \item $Conv(C'_{i1})$ crosses the  $Conv(C'_{i2})$, i.e., $C'_{i1} \cap C'_{i2} = \emptyset$ and $Conv(C'_{i1}) \cap Conv(C'_{i2}) \neq \emptyset$. Note that $Conv(C'_{i1})$ is a $\left(|C'_{i1}|-1\right)$-simplex and $Conv(C'_{i2})$ is a $\left(|C'_{i2}|-1\right)$-simplex for sufficiently large $d$.

  \item There exist $C''_{i1} \subseteq C'_{i1}$ and $C''_{i2} \subseteq C'_{i2}$ such that $|C''_{i1}| , |C''_{i2}| \geq \floor{(d+2)/2}, |C''_{i1}|+ |C''_{i2}|=d+2 $ and the $\left(|C''_{i1}|-1\right)$-simplex  $Conv(C''_{i1})$ crosses the $\left(|C''_{i2}|-1\right)$-simplex  $Conv(C''_{i2})$. 
\end{enumerate}

\end{lemma}

\begin{lemma}~\cite{GS}
\label{lemgs1}
Consider a set $C$ that contains $2d$ points in general position in $\mathbb{R}^d$ for sufficiently large $d$. Let $C' \subset C$ be a subset such that $|C'|=d+4$.
Let $C'_1$ and $C'_2$ be two disjoint subsets of $C'$ such that $|C'_1|=c'_1, |C'_2|=c'_2, C'_1 \cup C'_2=C'$ and $c'_1, c'_2 \geq \floor{(d+2)/2}$.
If the $(c'_1-1)$-simplex formed by $C'_1$ and the $(c'_2-1)$-simplex formed by $C'_2$ form a crossing pair, then the $(d-1)$-simplex formed by a point set $B'_1\supset C'_1$ and the $(d-1)$-simplex formed by a point set $B'_2 \supset C'_2$ satisfying $B'_1 \cap B'_2 = \emptyset$, $|B'_1|, |B'_2| = d$ and $ B'_1 \cup B'_2 = C$ also form a crossing pair.
\end{lemma}

\begin{thm}~\cite{MM}
 Among all convex polytopes with a given dimension and number of vertices, the cyclic polytope has the largest possible number of faces of each dimension.    
\end{thm}

\noindent The following corollary can be deducted from the upper bound theorem, see~\cite{ZIE}.
\begin{corollary}\label{cor:upbound}\cite{ZIE}
The convex hull of an $n$-point set in $\mathbb{R}^d$ has at most $\dbinom{n-\ceil{d/2}}{n-d}+ \dbinom{n-\floor{d/2}-1}{n-d}$ facets.
\end{corollary}

\begin{remark}
    Let $L_1$ and $L_2$ be two affine subspaces of $\mathbb{R}^d$. Let us denote their associated vector spaces by $\overline{L_1}$ and $\overline{L_2}$, respectively. We say $L_1$ and $L_2$ are orthogonal complements of each other if $\overline{L_1}$  and $\overline{L_2}$ are orthogonal, and $\mathbb{R}^d= \overline{L_1} \oplus \overline{L_2}$.  
\end{remark}
  
\begin{lemma}
\label{lem:imp}
Let $\sigma$ be a $\floor{d/2}$-simplex, and $\tau$ be a $(d-1)$-simplex such that all the $d+\floor{d/2}+1$ points of $Vert(\sigma) \cup Vert(\tau)$ are in general position in $\mathbb{R}^d$ for sufficiently large $d$. At most  $O((3^{3/4}/\sqrt{2})^{d}/\sqrt{d})$ $\ceil{d/2}$-faces  of $\tau$ cross $\sigma$. 
\end{lemma}

\begin{proof}
Let us denote by $\tilde{\sigma}$ the affine hull of  $Vert(\sigma)$. Note that any $\ceil{d/2}$-face of $\tau$  that crosses $\sigma$ intersects $\tilde{\sigma}$. Also, note that no vertex of $\tau$ lies on $\tilde{\sigma}$ due to the general position of the points in $Vert(\sigma) \cup Vert(\tau)$. Let us consider the orthogonal complement space  ${\tilde{\sigma}}^{\perp}$ of $\tilde{\sigma}$. Let $\{\tau'_1, \tau'_2,\ldots, \tau'_l\}$ be the set of all $\ceil{d/2}$-faces of $\tau$ that intersects $\tilde{\sigma}$. Denote by $\mathcal{V}$  the set of vertices $\bigcup_{j=1}^l Vert(\tau'_j)$. Let us assume that at least one $\ceil{d/2}$-face of $\tau$ intersects $\tilde{\sigma}$. Then, we can assume that $\left|\mathcal{V}\right|= \ceil{d/2}+1+k$ for some positive integer $k$. Let $\mathcal{V}$ be the set $\mathcal{V}=\{v_1, v_2, \ldots, v_{\ceil{d/2}+1+k}\}$. We project the points in $Vert(\sigma) \cup \mathcal{V}$ onto ${\tilde{\sigma}}^{\perp}$. The set $Vert(\sigma)$ maps to a single point in ${\tilde{\sigma}}^{\perp}$. Without loss of generality, let us assume that  this point is the origin $O$. Denote by $\tilde{v}_i$  the projection of  $v_i$
for each $v_i \in \mathcal{V}$. Let $\tilde{\mathcal{V}}$ denote the set $\{\tilde{v}_2, \tilde{v}_2, \ldots, \tilde{v}_{\ceil{d/2}+1+k} \}$.

For a tuple $(i_1,i_2, \ldots, i_{\ceil{d/2}+1})$, where $1 \leq i_1 < i_2< \ldots < i_{\ceil{d/2}+1} \leq \ceil{d/2}+1+k$, the convex hull of $\{\tilde{v}_{i_1},\tilde{v}_{i_2}, \ldots, \tilde{v}_{i_{\ceil{d/2}+1}}\}$ 
contains the origin $O$ if and only if $\ceil{d/2}$-simplex spanned by $\{v_{i_1}, v_{i_2}, \ldots, v_{i_{\ceil{d/2}+1}}\}$ intersects $\tilde{\sigma}$. Let us denote by $\hat{v_i}$ the vector $O\tilde{v}_i$. Consider the vector configuration $\hat{\mathcal{V}}=\{\hat{v}_1, \hat{v}_2, \ldots,$ $\hat{v}_{\ceil{d/2}+1+k}\}$. The following observations hold.

 \begin{observation}
 \label{obs:generalpos}
 Any subset of $\ceil{d/2}$ vectors of $\hat{\mathcal{V}}$ spans $\mathbb{R}^{\ceil{d/2}}$. 
\end{observation}
  For the sake of contradiction,  let us assume that the set of vectors $\{\hat{v}_{i_1}, \hat{v}_{i_2}, \ldots, \hat{v}_{i_{\ceil{d/2}}}\}$ spans some $\mathbb{R}^q$ where $q < \ceil{d/2}$. Then, $d+1$ points $\{v_{i_1},v_{i_2}, \ldots, v_{i_{\ceil{d/2}}} \}\cup Vert(\sigma)$ lie in a subspace whose dimension is  $\floor{d/2}+q \leq d-1$. This contradicts the fact that $Vert(\sigma) \cup Vert(\tau)$ are in general position in $\mathbb{R}^d$.

  Note that Observation~\ref{obs:generalpos} implies that each point of $\tilde{\mathcal{V}}$ is  distinct.  It also implies that  if the convex hull of $\{\tilde{v}_{i_1},\tilde{v}_{i_2}, \ldots, \tilde{v}_{i_{\ceil{d/2}+1}}\}$ contains the origin $O$, then the origin $O$ lies in the relative interior of  the convex hull of $\{\tilde{v}_{i_1},\tilde{v}_{i_2}, \ldots, \tilde{v}_{i_{\ceil{d/2}+1}}\}$.
  
   \begin{observation} 
   $\hat{\mathcal{V}}$ is a totally cyclic vector configuration.  
\end{observation}
  For $1 \leq i \leq \ceil{d/2}+k+1$, each $\tilde{v}_i$ is a vertex of a convex hull of some $\ceil{d/2}+1$ points of $\tilde{\mathcal{V}}$ which contains the origin $O$.
  
 These observations along with Lemma~\ref{lem:conditiongale} imply that with proper scaling $\hat{\mathcal{V}}$ is a Gale transformation of some sequence of points $Q=\langle q_1, q_2, \ldots, q_{\ceil{d/2}+1+k} \rangle$ having $\ceil{d/2}+1+k$ points in $\mathbb{R}^k$. Lemma~\ref{lem:generalpos} and Observation~\ref{obs:generalpos} together imply that these $\ceil{d/2}+1+k$ points of $Q$ are in general position in $\mathbb{R}^k$. Since the points in $Q$  are in general position in $\mathbb{R}^k$, Lemma~\ref{lem:facelemma} implies that  the convex hull of $\{\tilde{v}_{i_1},\tilde{v}_{i_2}, \ldots, \tilde{v}_{i_{\ceil{d/2}+1}}\}$ contains the origin $O$ if and only if the $k$ points in $Q \setminus \{q_{i_1}, q_{i_2}, \ldots, q_{i_{\ceil{d/2}+1}}\} $ span a $(k-1)$-dimensional face of the convex hull of $Q$. By Corollary~\ref{cor:upbound}, the maximum number of $(k-1)$-dimensional faces of the convex hull of $Q$ is $\dbinom{\ceil{d/2}+1+k-\ceil{k/2}}{\ceil{d/2}+1+k-k}+ \dbinom{\ceil{d/2}+1+k-\floor{k/2}-1}{\ceil{d/2}+1+k-k}=$ $\dbinom{\ceil{d/2}+1+\floor{k/2}}{\ceil{d/2}+1}+ \dbinom{\ceil{d/2}+\ceil{k/2}}{\ceil{d/2}+1}$. This quantity increases as $k$ increases. The maximum value $k$ can take is $\floor{d/2}-1$. By setting $k=\floor{d/2}-1$ and using Stirling's approximation, we obtain the upper bound   $O((3^{3/4}/\sqrt{2})^{d}/\sqrt{d})$ on the number of $\ceil{d/2}$-faces  of $\tau$ that intersect $\sigma$.  \qed
\end{proof}

\begin{remark}
Given $n$ points in general position in $\mathbb{R}^m$,  the number of $m$-simplices that are spanned by these $n$ points and contain the origin in their interiors is at most $\dbinom{\floor{\dfrac{n+m}{2}}}{m+1}+\dbinom{\ceil{\dfrac{n+m}{2}}}{m+1}$~\cite[Remark~$4.2$]{WW}. B{\'a}r{\'a}ny ~\cite{BAR} proved  this statement. \par
In the proof of Lemma~\ref{lem:imp}, the points in $\tilde{\mathcal{V}}$ are not necessarily in general position in $\mathbb{R}^{\ceil{d/2}}$. Due to this fact, we could not use the theorem of  B{\'a}r{\'a}ny~\cite{BAR} in our proof. Though the proof of Lemma~\ref{lem:imp} is similar to the proof of the above mentioned statement, for the sake of completeness we presented the proof in detail.
\end{remark}

\noindent \textbf{Proof of Theorem~\ref{thm0}:} Let $V=\{v_1,v_2, \ldots, v_{2d}\}$ be the vertices of $K_{2d}^d$ in its $d$-dimensional rectilinear drawing. Note that the points in $V$ are in general position in $\mathbb{R}^d$.
Let $E$ be the set of $(d-1)$-simplices that correspond to hyperedges of the drawing.
Choose  a subset of $V' \subset V$ having $d+4$ points.
Lemma~\ref{lemgs} implies that there exist $\floor{(d+4)/2}$ pairs of subsets $\{V'_{i1}, V'_{i2}\}$ for each $i$ satisfying $1 \leq i \leq \floor{(d+4)/2}$ such that all the three conditions mentioned in Lemma~\ref{lemgs}  hold.

It follows from Lemma~\ref{lemgs1} that each such crossing pair of a $(|V'_{i1}|-1)$-simplex and a $(|V'_{i2}|-1)$-simplex can be extended to  at least $\dbinom{d-4}{d-\floor{(d+2)/2}}= \Omega\left({2^d}/{\sqrt{d}}\right)$ crossing pairs of $(d-1)$-simplices corresponding to  the crossing pairs of hyperedges in $E$.
Therefore, the total number of crossing pairs of hyperedges, originated from a particular choice of $V'$, in a $d$-dimensional rectilinear drawing of $K_{2d}^d$ is at least $\floor{{(d+4)}/{2}}\Omega\left({2^d}/{\sqrt{d}}\right)=\Omega \left(2^d \sqrt{d}\right)$. \par

We can choose $V'$ in $\dbinom{2d}{d+4}= \Theta\left(4^d/\sqrt{d}\right)$ ways.
On the one hand, there exist $\Omega\left(2^d\sqrt{d}\right)$ crossing pairs of hyperedges in a $d$-dimensional rectilinear drawing of $K_{2d}^d$ for each choice of $V'$. On the other hand, each crossing pair of hyperedges may originate from the different choices of subsets having $d+4$ points from $V$. Next, we estimate an upper bound on the number of such choices. 

Let  $e_k,e_l \in E$ be a crossing pair of hyperedges. Let $V(e_k)$, $V(e_l)$ respectively denote the set of vertices of $e_k$ and $e_l$. Note that $|V(e_k)|=|V(e_l)|=d$,  $V(e_k) \cup V(e_l)= V$ and $V(e_k) \cap V(e_l)= \emptyset$. 

\noindent Lemma~\ref{lem:imp} implies that there can be at most $2 \dbinom{d}{\floor{d+2/2}}O((3^{3/4}/\sqrt{2})^{d}/\sqrt{d})= O((3^{3/4} \sqrt{2})^d/d)$  pairs of $\{V''_{i1},V''_{i2}\}$ such that following conditions hold.
\begin{enumerate}
\item $V''_{i1} \subset V(e_k)$, $V''_{i2}  \subset V(e_l)$ or $V''_{i1} \subset V(e_l)$, $V''_{i2} \subset V(e_k)$.  

\item $|V''_{i1}| , |V''_{i2}| \geq \floor{(d+2)/2}, |V''_{i1}|+ |V''_{i2}|=d+2$.

\item $Conv(V''_{i1})$ crosses $Conv(V''_{i2})$.
\end{enumerate}

Let us assume that the crossing pair of hyperedges $e_k,e_l \in E$ originates from a $d+4$ sized subset $V'_i$ of $V$.  Note that $V'_i$ contains two subsets $V'_{i1}$ and $V'_{i2}$ such that following conditions hold.

\begin{enumerate}
    \item $|V'_{i1}|,|V'_{i2}| \geq \floor{d+2/2}$, $V'_{i1} \cup V'_{i2}=V'$ and $V'_{i1} \cap V'_{i2}=\emptyset$.
    
    \item There exist either $V''_{i1} \subseteq V'_{i1}\subset V(e_k)$, $V''_{i2} \subseteq V'_{i2} \subset V(e_l)$ or $V''_{i1} \subseteq V'_{i1}\subset V(e_l)$, $V''_{i2} \subseteq V'_{i2} \subset V(e_k)$ 
    such that $|V''_{i1}| , |V''_{i2}| \geq \floor{(d+2)/2}, |V''_{i1}|+ |V''_{i2}|=d+2$  and $Conv(V''_{i1})$ crosses $Conv(V''_{i2})$. 

\end{enumerate}

Since each $V'_i$ contains a  pair  $\{V''_{i1},V''_{i2}\}$ and  each pair $\{V''_{i1},V''_{i2}\}$ can be extended in $O(d^2)$ ways to  a set of size $d+4$, a crossing pair of hyperedges can originate from $O((3^{3/4} \sqrt{2})^d/d)\times O(d^2)= O((3^{3/4} \sqrt{2})^dd)$ distinct subsets $V'_i \subset V$ of size $d+4$. This implies that there exist at least $\dfrac{\Omega \left(2^d \sqrt{d}\right)\Theta\left(4^d/\sqrt{d}\right)}{O((3^{3/4} \sqrt{2})^dd)}=\Omega\left(\dfrac{(4\sqrt{2}/3^{3/4})^d}{d}\right)$ crossing pairs of hyperedges in any $d$-dimensional rectilinear drawing of $K_{2d}^d$. \qed

\section {Maximum Rectilinear Crossing Number of Complete
$4$-uniform Hypergraphs}\label{sec3}

In this section, we prove that the maximum $4$-dimensional rectilinear crossing number of $K_{n}^4$ is $13 \dbinom{n}{8}$. This result proves Anshu et al.'s conjecture~\cite{AGS} affirmatively for $d=4$. 
We also produce a family of $4$-dimensional rectilinear drawings of $K_{n}^4$ having $13\dbinom{n}{8}$ crossing pairs of hyperedges. In order to prove these results, we first define the concept of the order type.

 \begin{figure*}[h]
  \centering
   \subfloat[]{\label{fig:rahul_fig1}\resizebox{0.35\textwidth}{!}{\begin{tikzpicture}[
	> = stealth, 
	auto,
	line width=0.25mm,
	node distance = 0.15cm, 
	circleshaded/.style={circle, draw=black, fill={rgb,255:red,220; green,220; blue,220}, very thick, scale=1, minimum size=5mm},
	circlewhite/.style={circle, draw=black, fill=none, very thick, scale=1, minimum size=1mm},
	]
      

	\node[circlewhite, label=left:{\Large $s'_i$}] (1) at (0,0) {};
	\node[circlewhite, label=above:{\Large $s'_j$}] (2) at (2,1) {};
	\node[circlewhite, label=right:{\Large $s'_k$}] (3) at (4,0) {};

	\path [] (1) edge (2);
	\path [] (2) edge (3);

	\draw[->,line width=1.5pt] (0.4, -0.2) .. controls(2,0.8) .. (3.6, -0.2);

      
\end{tikzpicture}}}
   \hspace{0.5cm}
   \subfloat[]{\label{fig:rahul_fig2}\resizebox{0.25\textwidth}{!}{\begin{tikzpicture}[
	> = stealth, 
	auto,
	line width=0.25mm,
	node distance = 0.15cm, 
	circleshaded/.style={circle, draw=black, fill={rgb,255:red,220; green,220; blue,220}, very thick, scale=1, minimum size=5mm},
	circlewhite/.style={circle, draw=black, fill=none, very thick, scale=1, minimum size=1mm},
	]
      

	\node[circlewhite, label=left:{\Large $s'_i$}] (1) at (0,0) {};
	\node[circlewhite, label=right:{\Large $s'_j$}] (2) at (2,1) {};
	\node[circlewhite, label=above:{\Large $s'_k$}] (3) at (0,2) {};

	\path [] (1) edge (2);
	\path [] (2) edge (3);

	\draw[->,line width=1.5pt] (0.2, 0.4) .. controls(1.8,1) .. (0.2, 1.6);

      
\end{tikzpicture}}}
  \caption{Possible orientations of a triplet in $\mathbb{R}^2$ }
  \label{fig:rahul_figure}
 \end{figure*}
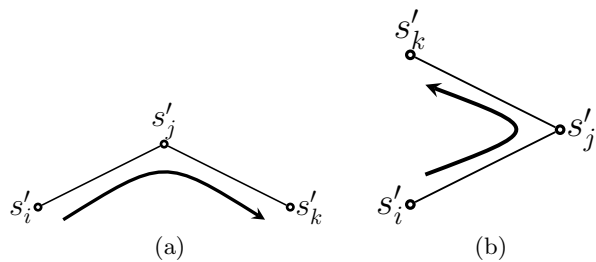

\begin{definition}[Order type]
Consider a sequence of points $S'= \langle s'_1,s'_2,$ $\ldots, s'_n \rangle$ where points are in general position in $\mathbb{R}^2$. We say an ordered triple $\langle s'_i, s'_j, s'_k \rangle$ with $i<j<k$ is oriented clockwise (counter-clockwise) if the point $s'_k$ lies in the clockwise (counter-clockwise) direction with respect to the line segment ${\rm  s'_is'_j}$ directed from $s'_i$ to  $s'_j$, see Figure~\ref{fig:rahul_figure}.
The order type of $S'$ is a mapping which assigns an orientation (clockwise or counter-clockwise) to each ordered triple $\langle s'_i, s'_j, s'_k \rangle$.
\end{definition}

\noindent Consider two sequences of points $S'= \langle s'_1,s'_2,$ $\ldots, s'_n \rangle$ and $S''=\langle s''_1,s''_2,$ $\ldots, s''_n \rangle$ in $\mathbb{R}^2$, such that the points in both the sequences are in general position. 
Sequences $S'$ and $S''$ are said to have same order type if for any indices $i<j<k$ the orientation of $\langle s'_i, s'_j, s'_k \rangle$ is same as the orientation of $\langle s''_i, s''_j, s''_k \rangle$.\\ 

\noindent Suppose that two sequences of points $S'=\langle s'_1,s'_2,$ $\ldots, s'_{2n} \rangle$ and $S''=\langle s''_1,s''_2, \ldots, s''_{2n} \rangle$ in $\mathbb{R}^2$ have same order type.
Consider a coloring $C$ where $n$ points of $S'$ are colored red, and the remaining $n$ points are colored violet.
The indices of red points are also the same in $S'$ and $S''$, implying that the indices of violet points are also the same.
For each tuple $(i_1,i_2, \ldots, i_{2m})$, where $1 \leq i_1 < i_2< \ldots < i_{2m} \leq 2n$, $\{s'_{i_1},s'_{i_2}, \ldots, s'_{i_{2m}}\}$ is a balanced $2m$-partition of $S'$ if and only if $\{s''_{i_1},s''_{i_2}, \ldots, s''_{i_{2m}}\}$ is a balanced $2m$-partition of $S''$~\cite{ZIE}.
There are infinitely many point configurations having $n$ points in general position in $\mathbb{R}^2$. 
There are only finitely many order types for such point configurations. 
We can think of order types as equivalence classes.
The point configurations that have the same order type share many combinatorial and geometric properties.
Aichholzer et al.~\cite{ALCHO1,ALCHO} created a database which contains all order types of eight points in general position in $\mathbb{R}^2$.
We use those point sets in  proofs of Theorem~\ref{thm1} and Lemma~\ref{lem:114}.\\

\noindent\textbf{Proof of Theorem~\ref{thm1}.} Let us consider all order types of the eight points in general position in $\mathbb{R}^2$. Aichholzer et al.~\cite{ALCHO1,ALCHO} listed all possible 3315 order types with their representative elements. 
Let us denote the point sequence corresponding to the $i^{th}$ order type with $o_i$. 
We also generate all possible colorings of a sequence of eight points where four of the points are red, and the remaining points are violet.
There are $\dbinom{8}{4}=70$ such colorings. 
Each coloring can be represented as an $8$-bit binary string with an equal number of zeroes and ones. 
Let us represent the $j^{th}$ coloring in lexicographical order by $c_j$.
We consider the point sequence of each order type and color it according to all the seventy possible ways such that there is an equal number of red and violet points in each coloring.
Formally, we consider the set $O_C=\{(o_i,c_j): 1 \leq i \leq 3315, 1 \leq j \leq 70 \}$ containing all possible pairs of $(o_i,c_j)$ for each $i$ satisfying $1 \leq i \leq 3315$, and $1 \leq j \leq 70$.\par

Consider a $4$-dimensional rectilinear drawing of $K_8^4$ where the vertices of $K_8^4$ are points in general position in $\mathbb{R}^4$. 
Let us denote these vertices by $V=\{v_1,v_2, \ldots, v_8\}$.
Consider a Gale transform $D(V)$ of $V$. 
Lemma~\ref{bjection} implies that the number of proper linear separations of $D(V)$ is equal to the number of crossing pairs of hyperedges in this particular drawing of $K_8^4$ since there exists a bijection between crossing pairs of hyperedges and proper linear separations of $D(V)$. 
Consider an affine Gale diagram $\overline{D(V)}$ having four red and four violet points such that all the eight points are in general position in $\mathbb{R}^2$. 
Observation~\ref{obs11} ensures such a $\overline{D(V)}$ always exists. 
Observation~\ref{obs12} ensures that the number of proper linear separations of $D(V)$ is equal to the total number of balanced $2$-partitions of $\overline{D(V)}$ plus the total number of balanced $4$-partitions of $\overline{D(V)}$ plus one. 
Note that $\overline{D(V)}$ is equivalent to one of the elements of $O_C$. 
Note that all elements of $O_C$ need not be an affine Gale diagram of some eight points in $\mathbb{R}^4$. 
Consider the point sequence $o_i$ under the coloring $c_j$. If there exists a monochromatic $4$-partition of $o_i$ under the colouring $c_j$, then $(o_i,c_j)$ is a projection of an acyclic vector configuration, and it cannot be an affine Gale diagram of any set of eight points in $\mathbb{R}^4$. 
We find the maximum of the sum of the total number of balanced $2$-partitions and the total number of balanced $4$-partitions over all members of $O_C$ by analyzing each of its members.
We wrote a program for this purpose in Python 3.7.1.
The code has been made open source as well.\footnote{https://github.com/ayan-iiitd/maximum-rectilinear-crossing-number-of-uniform-hypergraphs.git}
We find the maximum to be twelve. 
Observation~\ref{obs12} implies that the maximum number of proper linear separations of $D(V)$ is $12+1=13$. 
Lemma~\ref{bjection} implies that the maximum number of crossing pairs of hyperedges in any $4$-dimensional rectilinear drawing of $K_8^4$ is thirteen.\par

Consider a $4$-dimensional rectilinear drawing of $K_n^4$ where all the vertices are placed on the $4$-dimensional moment curve.

Anshu et al.~\cite{AGS} showed that in this drawing, every $K_8^4$ has 13 crossing pairs of hyperedges. 
Since the crossing pairs of hyperedges spanned by a set of eight vertices are distinct from the crossing pairs of hyperedges spanned by another set of eight vertices, the above argument shows that
\emph{max-}$\overline {cr}_4(K_{n}^4) = 13 \dbinom{n}{8}$. \qed

\begin{lemma} \label{lem:114}
Consider a $4$-dimensional neighborly polytope $P$ having $n$ vertices such that all the vertices of $P$ are in general position in $\mathbb{R}^4$.
Consider a $4$-dimensional rectilinear drawing of $K^4_{n}$ such that the vertices of $K^4_{n}$ are placed at the vertices of $P$.
The number of crossing pairs of hyperedges in this $4$-dimensional rectilinear drawing of  $K^4_{n}$ is $13\dbinom{n}{8}$.
\end{lemma}

\begin{proof}
As mentioned in the proof of Theorem~\ref{thm1}, let us consider the set $O_C=\{(o_i,c_j): 1 \leq i \leq 3315, ~ 1 \leq j \leq 70 \}$ containing all possible pairs of $(o_i,c_j)$ for each $i$ satisfying $1 \leq i \leq 3315$ and each $j$ satisfying $1 \leq j \leq 70$. \par

Consider a $4$-dimensional rectilinear drawing of $K_8^4$ where the vertices of $K_8^4$ are placed at the vertices of a $4$-dimensional neighborly polytope whose vertices are in general position in $\mathbb{R}^4$. 
Let us denote these vertices by $V=\{v_1,v_2, \ldots, v_8\}$.
Consider a Gale transform $D(V)$ of $V$. 
Consider an affine Gale diagram $\overline{D(V)}$ having four red and four violet points such that all the eight points are in general position in $\mathbb{R}^2$.
Lemma~\ref{obs:13} gives us the necessary and sufficient conditions for $(o_i,c_j)$ to be a Gale transform of a $4$-dimensional neighborly polytope whose vertices are in general position in $\mathbb{R}^4$.

 Let us consider all pairs $(o_i,c_j)$ such that they satisfy the three conditions mentioned in Lemma~\ref{obs:13}. 
Let us denote this collection by $O'$.\\


$O'=\{(o_i,c_j): (o_i,c_j)$\emph{~follows the three conditions mentioned in Lemma~\ref{obs:13}$\,\}$}.\\

Note that $\overline{D(V)}$ is equivalent to one of the elements of $O'$. Also, note that each  pair composed by an order type and a coloring corresponding to the members of $O'$  is equivalent to an affine Gale diagram of a $4$-dimensional neighborly polytope having all its eight vertices in general position in $\mathbb{R}^4$. \par

We calculate the sum of the total number of balanced $2$-partitions and the total number of balanced $4$-partitions over all members of $O'$ by analyzing each of its members.
We wrote a program for this purpose in Python 3.7.1, and it is also available in the aforementioned GitHub repository.
We find the value to be twelve for all members of $O'$. \par
Observation~\ref{obs12} implies that the number of proper linear separations of $D(V)$ is $12+1=13$. 
This implies that there exist thirteen crossing pairs of hyperedges in a $4$-dimensional rectilinear drawing of $K_8^4$ when the vertices of $K_8^4$ are placed at the vertices of a $4$-dimensional neighborly polytope having all its eight in general position in $\mathbb{R}^4$.\par

Consider a $4$-dimensional neighborly polytope $P$ having $n$ vertices such that all the vertices of $P$ are in general position in $\mathbb{R}^4$.
Consider a $4$-dimensional rectilinear drawing of  $K^4_{n}$ such that the vertices of $K^4_{n}$ are placed at the vertices of $P$.
Consider any subset $P'$ of the vertex set of $P$ having size eight.
Note that the $4$-dimensional polytope spanned by the vertices of $P'$ is also a neighborly polytope.
This implies that in such a drawing every copy of $K_8^4$ has 13 crossing pairs of hyperedges. 
Since the crossing pairs of hyperedges spanned by a set of eight vertices are distinct from the crossing pairs of hyperedges spanned by another set of eight vertices, the above argument shows that the number of crossing pairs of hyperedges in a $4$-dimensional rectilinear drawing of $K_n^4$ is $13 \dbinom{n}{8}$ if the vertices of $K^4_{n}$ are placed at the vertices of a $4$-dimensional neighborly polytope having all its vertices in general position in $\mathbb{R}^4$. \qed
\end{proof}

\section{Maximum Rectilinear Crossing Number of Complete $d$-partite $d$-uniform Hypergraphs}\label{sec4}

In this section, we prove that \emph{max-}$\overline {cr}_d(K^d_{d \times n})= (2^{d-1}-1) {\dbinom{n}{2}}^d$. In order to prove this result, we first prove that the maximum $d$-dimensional rectilinear crossing number of $K^d_{d \times 2}$ is $2^{d-1}-1$ in Lemma~\ref{lem:1}. 
We then create a $d$-dimensional rectilinear drawing of $K^d_{d \times n}$ such that each of the ${\dbinom{n}{2}}^d$ induced sub-hypergraphs, which is isomorphic to $K^d_{d \times 2}$, spans $2^{d-1}-1$ crossing pairs of hyperedges. 
Since the crossing pairs of hyperedges spanned by a copy of $K^d_{d \times 2}$ are distinct from the crossing pairs of hyperedges spanned by another copy of $K^d_{d \times 2}$, this implies that {max-}$\overline {cr}_d(K^d_{d \times n})= (2^{d-1}-1) {\dbinom{n}{2}}^d$. \par

Next, we state three lemmas which are used in the proof of Lemma~\ref{lem:1}.
\begin{lemma}
~\cite{BR} \label{lem:dpach}
Let $p_1 \prec p_2 \prec 
\ldots \prec p_{\floor{\dfrac{d}{2}}+1}$ and 
$q_1 \prec q_2 \prec \ldots \prec 
q_{\ceil{\dfrac{d}{2}}+1}$ be two
distinct point sequences on the 
$d$-dimensional moment curve such that 
$p_i \neq q_j$ for any $1 \leq i \leq \floor{\dfrac{d}{2}}+1$
and $1 \leq j \leq \ceil{\dfrac{d}{2}}+1$.
The $\floor{\dfrac{d}{2}}$-simplex and the $\ceil{\dfrac{d}{2}}$-simplex,
formed respectively by these point sequences,
\emph{cross} if and only if every interval $(q_j , q_{j+1})$
contains exactly one $p_i$ and every interval $(p_i , p_{i+1})$
contains exactly one $q_j$. 
\end{lemma}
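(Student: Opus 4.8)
The plan is to reduce the geometric crossing condition to a purely combinatorial sign condition via the Gale transform, and then to read off that sign condition from the Vandermonde structure of the moment curve. Write $P=\{p_1,\dots,p_{\floor{d/2}+1}\}$ and $Q=\{q_1,\dots,q_{\ceil{d/2}+1}\}$, and let $A=P\cup Q$ be the resulting sequence of $d+2$ points on $\gamma$. Since $\floor{d/2}+\ceil{d/2}=d$, the $\floor{d/2}$-simplex $Conv(P)$ and the $\ceil{d/2}$-simplex $Conv(Q)$ together use all $|A|=d+2$ vertices, so their crossing is exactly the event that $Conv(P)$ crosses $Conv(A\setminus P)$. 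The Gale transform $D(A)$ therefore lives in $\mathbb{R}^{(d+2)-d-1}=\mathbb{R}^{1}$, i.e.\ it assigns a scalar $g_i$ to each point. By Lemma~\ref{genposi} the general position of $A$ forces every $g_i\neq 0$, and by Lemma~\ref{bjection} the two simplices cross if and only if the scalars $\{g_i\}$ admit a linear separation with the indices of $P$ on one side and those of $Q$ on the other. In $\mathbb{R}^{1}$ a linear separation through the origin is just a partition by sign, so crossing is equivalent to the condition that all Gale scalars indexed by $P$ share one sign while all those indexed by $Q$ share the opposite sign.

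It then remains to determine the signs of the $g_i$ for points on $\gamma$; this is the only genuinely technical step, and I expect it to be the main obstacle, since everything else is bookkeeping. Re-index the points as $x_0\prec x_1\prec\dots\prec x_{d+1}$ according to their parameters $t_0<t_1<\dots<t_{d+1}$. The one-dimensional Gale transform is exactly the (unique up to scale) affine dependence $\sum_{i=0}^{d+1}\lambda_i x_i=\vv{\bm 0}$ with $\sum_i\lambda_i=0$, so I would take $g_i=\lambda_i$. Lifting each $x_i=(t_i,\dots,t_i^{d})$ to $\hat x_i=(1,t_i,\dots,t_i^{d})\in\mathbb{R}^{d+1}$ and solving this dependence by Cramer's rule gives $\lambda_i=(-1)^i V_i$, where $V_i$ is the Vandermonde determinant of the remaining $d+1$ lifted points. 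Because $t_0<t_1<\dots<t_{d+1}$, each factor of $V_i=\prod_{\,j<k,\ j,k\neq i}(t_k-t_j)$ is positive, so $\mathrm{sign}(g_i)=(-1)^i$ up to a single global sign. Hence the Gale scalars strictly alternate in sign along the curve: the positive scalars occupy the even positions of $x_0\prec\dots\prec x_{d+1}$ and the negative scalars the odd positions, or vice versa.

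Combining the two steps, $Conv(P)$ and $Conv(Q)$ cross if and only if $P$ is exactly the set of even-indexed points and $Q$ the set of odd-indexed points in the merged order (or the reverse); equivalently, the points of $P$ and $Q$ strictly alternate along $\gamma$, with no two points of the same set consecutive. Finally I would observe that strict alternation is equivalent to the interval condition in the statement: if some interval $(q_j,q_{j+1})$ failed to contain exactly one $p_i$, or some $(p_i,p_{i+1})$ failed to contain exactly one $q_j$, then two points of the same set would be adjacent in the merged order, contradicting alternation; conversely, the two interval conditions together forbid any two same-set points from being adjacent and hence force alternation. This establishes both directions of the equivalence and completes the proof.
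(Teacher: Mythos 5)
Your proof is correct, but note that the paper itself contains no proof of this lemma: it is imported as a black box from Breen \cite{BR}, where it appears as a statement about primitive Radon partitions of cyclic polytopes. So the comparison is with the classical argument rather than with anything in the paper's text. Your route --- merge the two vertex sets into a single sequence $A$ of $(\lfloor d/2\rfloor+1)+(\lceil d/2\rceil+1)=d+2$ points, observe that the Gale transform of $A$ then lives in $\mathbb{R}^{(d+2)-d-1}=\mathbb{R}^{1}$, use Lemma~\ref{bjection} to translate ``the two simplices cross'' into ``the scalars indexed by $P$ and those indexed by $Q$ have opposite signs,'' and finally compute those signs from the unique affine dependence via Cramer's rule --- is sound at every step. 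The null space of the $(d+1)\times(d+2)$ matrix of lifted points is indeed spanned by $\lambda_i=(-1)^i V_i$ with each $V_i$ a Vandermonde determinant that is positive because the parameters are increasing, so the signs strictly alternate along the curve; the choice of where to put the row of ones only flips a global sign and does not affect alternation. Your closing bookkeeping (interval condition $\Leftrightarrow$ no two same-set points adjacent $\Leftrightarrow$ strict alternation) is also correct, and the parity counts work out in both cases: for $d$ even either set may come first, while for $d$ odd the larger set $Q$ must occupy both extreme positions, matching the fact that it gets the even-indexed slots among $x_0\prec\cdots\prec x_{d+1}$. In substance this \emph{is} the classical proof of the alternation criterion for Radon partitions of points on the moment curve, but your packaging has a genuine advantage in context: it derives the lemma entirely from the Gale-transform toolkit (Lemmas~\ref{genposi} and~\ref{bjection}) that the paper already sets up for Theorem~\ref{thm1}, making the paper self-contained where it currently relies on an external citation.
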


\begin{lemma}
~\cite{DP} \label{lem:dpach1}
Let $P$ and $Q$ be two vertex-disjoint $(d-1)$-simplices such that each of the 
$2d$ vertices belonging to these simplices lies on the $d$-dimensional moment curve. 
If $P$ and $Q$ cross, then there exists a $\floor{\dfrac{d}{2}}$-simplex $U 
\subsetneq P$ and another $\ceil{\dfrac{d}{2}}$-simplex $V \subsetneq Q$ such that $U$ and $V$ cross. 
\end{lemma}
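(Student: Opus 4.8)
The plan is to reduce the crossing of the two full $(d-1)$-simplices to an \emph{alternation} statement along the moment curve, which then feeds directly into Lemma \ref{lem:dpach}. Write $P=\{p_1,\dots,p_d\}$ and $Q=\{q_1,\dots,q_d\}$, list all $2d$ vertices in increasing order of their parameter as $x_1 \prec x_2 \prec \cdots \prec x_{2d}$, and record for each $x_k$ a color $\sigma_k \in \{+,-\}$ according to whether $x_k\in P$ or $x_k \in Q$.

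First I would turn the geometric crossing into a signed affine dependence. Since $P$ and $Q$ cross, their relative interiors share a point $z$, so there are barycentric coefficients with $z=\sum_{p\in P}\lambda_p\,p=\sum_{q\in Q}\mu_q\,q$, where all $\lambda_p,\mu_q>0$ and $\sum_p\lambda_p=\sum_q\mu_q=1$. Lifting each vertex $x=(t,t^2,\dots,t^d)$ to $v(x)=(1,t,t^2,\dots,t^d)\in\mathbb{R}^{d+1}$, the point-equality together with the two normalizations becomes a single linear dependence $\sum_{k=1}^{2d} c_k\, v(x_k)=\vv{\bm{0}}$, in which $c_k>0$ exactly when $x_k\in P$ and $c_k<0$ exactly when $x_k\in Q$. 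In particular every coefficient is nonzero and $\operatorname{sign}(c_k)=\sigma_k$ for all $k$.

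The heart of the argument is a sign-change bound coming from the fact that $\{1,t,\dots,t^d\}$ is a Chebyshev system. I would show that any nonzero coefficient sequence $(c_k)$ realizing a dependence $\sum_k c_k v(x_k)=\vv{\bm{0}}$ must change sign at least $d+1$ times: otherwise the sign pattern has at most $d$ changes, so one could interpolate a polynomial $\pi$ of degree at most $d$ (placing its $\le d$ roots between the sign changes) with $\operatorname{sign}\bigl(\pi(t_k)\bigr)=\operatorname{sign}(c_k)$, forcing $0=\sum_k a_j\langle\text{coeffs}\rangle=\sum_k c_k\,\pi(t_k)>0$, a contradiction. Applying this to the sequence above, the color pattern $\sigma_1\sigma_2\cdots\sigma_{2d}$ has at least $d+1$ changes, so the sorted vertices split into at least $d+2$ maximal monochromatic blocks.

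Finally I would extract the reduced simplices. Choosing one vertex from each of $d+2$ consecutive blocks yields a subsequence $y_1\prec y_2\prec\cdots\prec y_{d+2}$ whose colors strictly alternate between $P$ and $Q$; such a sequence perfectly interleaves, so Lemma \ref{lem:dpach} guarantees that the simplex on its $P$-colored vertices crosses the simplex on its $Q$-colored vertices. Since a strictly alternating sequence of length $d+2$ splits into parts of sizes $\floor{d/2}+1$ and $\ceil{d/2}+1$, these two crossing simplices are exactly a $\floor{d/2}$-simplex $U\subseteq P$ and a $\ceil{d/2}$-simplex $V\subseteq Q$, as required. The main obstacle is the Chebyshev sign-change estimate of the third step, which is what upgrades a mere intersection of convex hulls into the rigidly alternating pattern demanded by Lemma \ref{lem:dpach}. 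One further point needs care when $d$ is odd: the two part-sizes then differ, and placing the smaller part inside $P$ and the larger inside $Q$ requires choosing the block orientation appropriately — always possible once there are strictly more than $d+2$ blocks, and otherwise forced by the boundary colors of the (tight) $d+2$-block pattern; for even $d$ the two sizes coincide and no such choice is needed.
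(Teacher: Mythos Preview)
The paper gives no proof of this lemma; it is quoted from Dey--Pach \cite{DP} and used as a black box, so there is nothing in the paper to compare your attempt against.

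Your argument is the standard one and the core is sound: the crossing yields a strictly-signed affine dependence on the lifted vectors $(1,t_k,\dots,t_k^d)$, the Chebyshev-system trick forces at least $d+1$ sign changes and hence at least $d+2$ monochromatic blocks, and an alternating run of $d+2$ block representatives feeds straight into Lemma~\ref{lem:dpach}.

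The only slip is in your closing parity remark. When $d$ is odd and there are exactly $d+2$ blocks, the outer colour is \emph{not} forced to be $Q$. For $d=3$, the order $P\,Q\,P\,Q\,Q\,P$ (with $P$ at positions $1,3,6$ and $Q$ at $2,4,5$) is a genuine crossing on the moment curve; it has exactly five blocks with $P$ on both ends, and one checks directly that the only alternating $5$-chain is $P\,Q\,P\,Q\,P$, so no edge of $P$ pierces the triangle $Q$ (while the edge $\{2,4\}$ of $Q$ does pierce $P$). Hence the literal asymmetric conclusion ``$\lfloor d/2\rfloor$-simplex inside $P$, $\lceil d/2\rceil$-simplex inside $Q$'' can fail. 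What your proof actually delivers is the symmetric statement --- some $\lfloor d/2\rfloor$-simplex in one of $P,Q$ crosses some $\lceil d/2\rceil$-simplex in the other --- and that is exactly how the paper uses the lemma downstream (``there exists an alternating chain of $d+2$ vertices''). Note also that for $d=3$ the strict inclusion $V\subsetneq Q$ is impossible as written, since a $\lceil 3/2\rceil$-simplex already has as many vertices as $Q$; the $\subsetneq$ should be read as $\subseteq$.
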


\begin{lemma}
\label{AA}~\cite{AA}
Let us consider $d$ pairwise disjoint sets, each having $n$ points in $\mathbb{R}^d$, such that all $dn$ points are in general position.
Then there exist $n$ pairwise disjoint $(d-1)$-simplices such that each simplex has one vertex from each set.
\end{lemma}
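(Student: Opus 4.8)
The plan is to prove the statement by induction on $n$, with the discrete Ham-Sandwich theorem as the engine. Write the $d$ color classes as $C_1,\dots,C_d\subset\mathbb{R}^d$, each of size $n$; the crucial structural coincidence is that the number of classes equals the ambient dimension, which is exactly the hypothesis needed to apply Ham-Sandwich and obtain a single hyperplane that simultaneously bisects all $d$ classes. The base case $n=1$ is immediate: the $d$ points, one of each color, are affinely independent by general position and form the sole rainbow $(d-1)$-simplex, with nothing to separate.

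If $n$ is even, I take a Ham-Sandwich hyperplane $h$ for $C_1,\dots,C_d$; because all $dn$ points are in general position and each class has even size, $h$ can be chosen to avoid every point, so each open halfspace contains exactly $n/2$ points of each color. The two sides are independent instances with $n/2$ points per color (still in general position, being subsets), and the induction hypothesis supplies $n/2$ pairwise disjoint rainbow simplices on each side. Since a simplex with all its vertices in one open halfspace has its whole convex hull inside that convex open halfspace, the two families lie on opposite sides of $h$ and are mutually disjoint, yielding $n$ pairwise disjoint rainbow simplices in total.

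If $n$ is odd, general position lets me choose a Ham-Sandwich hyperplane $h$ incident to exactly $d$ of the points. For each $i$ the number $k_i$ of points of $C_i$ lying on $h$ must be odd, since otherwise the $n$ points of $C_i$ cannot be split evenly across the two open sides; hence each $k_i\ge 1$ and odd, and because $\sum_i k_i = d$ we are forced to have $k_i=1$ for every $i$. Thus $h$ carries exactly one point of each color, and these $d$ points form a rainbow $(d-1)$-simplex lying inside $h$. The remaining $(n-1)/2$ points of each color on either side give two smaller instances; the induction hypothesis produces $(n-1)/2$ pairwise disjoint rainbow simplices on each side, all lying strictly off $h$. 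The simplex contained in $h$ is therefore disjoint from every recursively built simplex, the two recursive families are separated by $h$, and the three collections combine into $n$ pairwise disjoint rainbow simplices.

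I expect the main obstacle to be the precise discrete Ham-Sandwich bookkeeping: justifying that for even $n$ the cut can be made to miss all points (an exact $n/2$--$n/2$ split), and that for odd $n$ there is a cut through exactly $d$ points, which then forces—via the parity count above—one point of each color onto $h$ together with their affine independence. Both facts rest on the general-position hypothesis combined with a standard perturbation of the Ham-Sandwich hyperplane, and this is the only step that is not a direct consequence of the induction and the convexity of open halfspaces.
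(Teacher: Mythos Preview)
The paper does not give a proof of this lemma at all; it is quoted verbatim from Akiyama and Alon and used as a black box in the proof of Lemma~\ref{lem:1}. Your proposal---induction on $n$ with the discrete Ham--Sandwich theorem furnishing the split---is precisely the argument in the cited source, so there is nothing in the present paper to compare against and you have correctly reconstructed the original proof.

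Your odd case is airtight: any Ham--Sandwich hyperplane for $d$ odd-sized classes in general position must carry at least one point of each colour (parity forces $k_i\ge 1$) and at most $d$ points altogether (general position), hence exactly one of each; these $d$ points are automatically affinely independent, since otherwise together with any further point they would yield $d+1$ cohyperplanar points. The even case, which you rightly flag, is the only place needing care. One clean way to finish it under the general-position hypothesis: a Ham--Sandwich hyperplane $h$ carries at most $d$ points, and with $a_i,b_i\le n/2$ on the two open sides and $a_i+b_i+k_i=n$, the target redistribution $x_i=n/2-a_i$ of the on-$h$ points of colour $i$ to side $A$ satisfies $0\le x_i\le k_i$. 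Because the $\sum_i k_i\le d$ points on $h$ are affinely independent, the convex hulls of any chosen subset and its complement are disjoint faces of a simplex and hence linearly separable; a small tilt of $h$ realising that separation achieves the exact $n/2$--$n/2$ split with no point on the new hyperplane. With that detail filled in, the proof is complete.
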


\begin{lemma} \label{lem:1}
For $d \geq 2$, the maximum $d$-dimensional rectilinear crossing number of $K^d_{d \times 2}$ is $2^{d-1}-1$.
\end{lemma}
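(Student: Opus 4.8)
The plan is to first pin down which pairs of hyperedges can possibly cross, bound their number, and then realize the bound on the moment curve. Write the $d$ parts as $\{a_i,b_i\}$ for $1\le i\le d$, so $K^d_{d\times 2}$ has $2d$ vertices and $2^d$ hyperedges, each a transversal choosing one vertex from every part. Two hyperedges are vertex-disjoint only if they differ in every part, i.e.\ one is obtained from the other by swapping $a_i\leftrightarrow b_i$ simultaneously in all $d$ parts; call such a pair \emph{complementary}. Hence the only candidate crossing pairs are the $2^{d-1}$ complementary pairs, which already gives $\overline{max\text{-}cr}_d(K^d_{d\times 2})\le 2^{d-1}$.

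For the sharper upper bound I would invoke Lemma \ref{AA} with $n=2$, applied to the $d$ parts (each a set of $2$ points, with all $2d$ points in general position in $\mathbb{R}^d$): it yields two pairwise disjoint $(d-1)$-simplices, each with exactly one vertex from every part. Being disjoint, these two simplices share no vertex, so together they use all $2d$ vertices and thus form a complementary pair; being disjoint, they do not cross. Therefore every general-position drawing contains at least one non-crossing complementary pair, so at most $2^{d-1}-1$ of the $2^{d-1}$ complementary pairs can cross, giving $\overline{max\text{-}cr}_d(K^d_{d\times 2})\le 2^{d-1}-1$.

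For the matching lower bound I would place the $2d$ vertices on the $d$-dimensional moment curve $\gamma$, assigning part $i$ to the two consecutive positions $2i-1$ and $2i$. Reading the positions $1,2,\dots,2d$ and labelling each by which side ($S$ or its complement) of a complementary pair it belongs to, the relevant quantity is the number of maximal monochromatic blocks: combining Lemmas \ref{lem:dpach} and \ref{lem:dpach1}, two complementary simplices on $\gamma$ cross iff one can extract a $\floor{\frac{d}{2}}$-simplex from one and a $\ceil{\frac{d}{2}}$-simplex from the other that strictly alternate, which happens exactly when this block count is at least $d+2$ (a consistency check: summing over all complementary pairs reproduces $c_m^d$, e.g.\ $13$ for $d=4$). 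Encoding a transversal by $x\in\{0,1\}^d$ according to whether it takes the left or right point of each consecutive pair, a short computation shows the block count equals $2d-t(x)$, where $t(x)$ is the number of indices $i$ with $x_i\ne x_{i+1}$; hence the pair crosses iff $t(x)\le d-2$, i.e.\ iff $x$ is not the fully alternating string. Exactly the two fully alternating strings fail, and these two strings are bitwise complements and so describe a single complementary pair, leaving precisely $2^{d-1}-1$ crossing complementary pairs in this drawing.

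The main obstacle is the third step: making the crossing criterion on the moment curve both correct and usable. Lemma \ref{lem:dpach1} supplies only the necessary direction (a crossing forces alternating sub-simplices), so I must also argue the converse — that the ``block count $\ge d+2$'' condition is sufficient for the full $(d-1)$-simplices to cross — and verify that the extracted alternating sub-simplices can always be chosen with the balanced sizes $\floor{\frac{d}{2}}+1$ and $\ceil{\frac{d}{2}}+1$ forced by Lemma \ref{lem:dpach}. Once this characterization is in hand, the block count above is routine and the upper and lower bounds coincide at $2^{d-1}-1$, proving the lemma.
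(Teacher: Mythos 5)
Your proposal is correct and follows essentially the same route as the paper: the same identification of the $2^{d-1}$ complementary (transversal) pairs, the same application of Lemma~\ref{AA} with $n=2$ for the upper bound $2^{d-1}-1$, and the same moment-curve drawing with the alternation criterion of Lemmas~\ref{lem:dpach} and~\ref{lem:dpach1} for the lower bound --- your binary-string/block-count bookkeeping is just a tidier version of the paper's ``bucket'' case analysis, and the converse direction you flag as the main obstacle is asserted by the paper at exactly the same level of detail (it states the ``if and only if'' directly from the two lemmas), so you are not missing anything the paper supplies. One parenthetical slip: summing your criterion over the complementary pairs of $K^d_{d\times 2}$ gives $2^{d-1}-1$ (i.e.\ $7$, not $13$, for $d=4$); the value $c_m^d$ arises only when summing over all $\frac{1}{2}\binom{2d}{d}$ disjoint splits of $2d$ points on the moment curve, i.e.\ for $K^d_{2d}$, though this remark plays no role in your argument.
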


\begin{proof}
Consider the hypergraph $K^d_{d \times 2}$. For each $i$ satisfying $1 \leq i \leq d$, let us denote the $i^{th}$ part of the vertex set of $K^d_{d \times 2}$ by $C_i$.
Let $\{p_{c_{i}}, p'_{c_{i}} \}$ denote the set of two vertices in $C_i$.
Let $A$ be a set of $d$ vertices of $K^d_{d \times 2}$ such that each vertex of $A$ is from different parts of $K^d_{d \times 2}$.
Let $B$ be the set of the remaining vertices of $K^d_{d \times 2}$.
Note that $|B|=d$ and each vertex of $B$ is from different parts of $K^d_{d \times 2}$. 
The number of unordered pairs $\{A,B\}$ is $\dfrac{1}{2}2^d= 2^{d-1}$.
Our goal is to find the maximum number of unordered pairs, $\{A,B\}$ such that the $(d-1)$-simplex formed by the vertices of $A$ forms a crossing with the $(d-1)$-simplex formed by the vertices of $B$.
Lemma~\ref{AA} implies that in any $d$-dimensional rectilinear drawing of $K^d_{d \times 2}$, there exists a pair of disjoint simplices such that each simplex has one vertex from each part of $K^d_{d \times 2}$.
This implies the maximum number of unordered pairs $\{A,B\}$ such that $(d-1)$-simplex formed by the vertices of $A$ forms a crossing with the $(d-1)$-simplex formed by the vertices of $B$ is $2^{d-1}-1$.\par

\begin{figure}[!ht]
  \centering
  \includegraphics{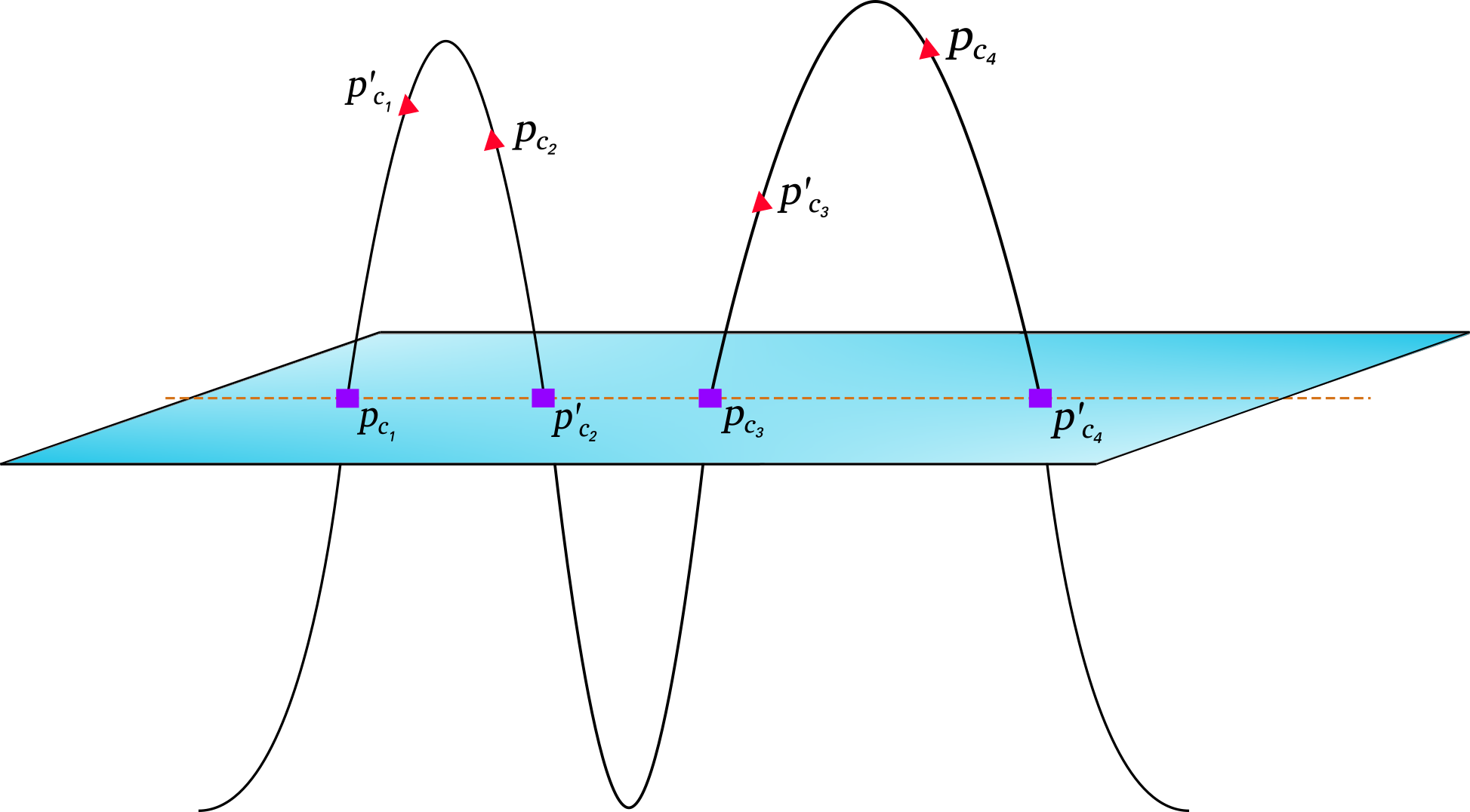}
  \caption{Non-crossing pair of hyperedges of $K^4_{4 \times 2}$}
  \label{fig:1}
\end{figure}

Let us consider a particular $d$-dimensional rectilinear drawing of $K^d_{d \times 2}$ that achieves the above mentioned bound. 
In this particular drawing, the vertices of $K^d_{d \times 2}$ are placed on the $d$-dimensional moment curve such that they satisfy the ordering $p_{c_{1}} \prec p'_{c_{1}} \prec p_{c_{2}} \prec p'_{c_{2}}\ldots \prec p_{c_{d-1}} \prec p'_{c_{d-1}} \prec p_{c_{d}} \prec p'_{c_{d}}$.

Let us assume that for any  pair $\{A,B\}$, $A$ contains the first vertex, i.e., $p_{c_{1}}$. 
Given a pair $\{A, B\}$, the vertices of $A$ create $d$ segments of the $d$-dimensional moment curve.
We call each segment a bucket. The $i^{th}$ bucket $b_i$ contains the set of points on the moment curve between $\{p_{c_i}, p'_{c_i}\} \cap A$ and $\{p_{c_{i+1}}, p'_{c_{i+1}}\} \cap A$ where $1 \leq i \leq d-1$. Note that the last bucket $b_d$ has only one endpoint created by the last vertex (according to the order mentioned above) of $A$ and contains all the points over the $d$-dimensional moment curve which succeed the last vertex of $A$. Note that the points on the $d$-dimensional moment curve which precede $p_{c_{1}}$ are not part of any bucket.
 
Since, both $A$ and $B$ contain exactly one vertex from each part of the vertex set, the following properties hold.

\begin{itemize}
\item The first bucket contains either one vertex or two vertices of $B$, but it can never be empty.
\item For each $i$ satisfying $2 \leq i \leq d-1$, each bucket $b_i$ can contain no vertex of $B$, one vertex of $B$ or two vertices of $B$ depending upon the endpoints of the bucket.
The last bucket contains either no vertex or one vertex of $B$.
\item For any pair of consecutive buckets, both of them cannot contain two vertices of $B$.
\end{itemize}

Lemmas~\ref{lem:dpach} and~\ref{lem:dpach1} together imply that $Conv(A)$ and $Conv(B)$ do not \emph{cross} if and only if there does not exist an alternating chain of $d+2$ vertices as mentioned in Lemma~\ref{lem:dpach}. \par

Note that to avoid such an alternating chain of $d+2$ vertices exactly $\floor{d/2}$ buckets should be empty since every bucket can contain at most two vertices of $B$ and all the $d$ vertices of $B$ should be partitioned into the $d$ buckets.
Also, note that any two non-empty buckets are not consecutive, and the first bucket is not empty.\\

When $d$ is even, this implies that each of the odd-numbered buckets contains two vertices, and even-numbered buckets are empty. 
The only pair $\{A, B\}$ such that the $(d-1)$-simplex formed by the vertices of $A$ does not form a crossing with the $(d-1)$-simplex formed by the vertices of $B$ is the following:\\



\noindent \scalebox{1}{$A=\{p_{c_{1}}, p'_{c_{2}}, p_{c_{3}}, p'_{c_{4}}, \ldots, p_{c_{d-1}}, p'_{c_{d}} \}, B=\{p'_{c_{1}}, p_{c_{2}}, p'_{c_{3}}, p_{c_{4}}, \ldots, p'_{c_{d-1}}, p_{c_{d}} \}$}.\\

When $d$ is odd, the last bucket should contain exactly one vertex of $B$.
Otherwise, we can form an alternating chain of $d+2$ vertices since at least $\floor{d/2}+1$ of the first $d-1$ buckets are non-empty.
This implies that for odd $d$, all the even-numbered buckets are empty and each of the odd-numbered buckets contains two vertices except the last bucket which contains one vertex.
The only pair $\{A, B\}$ such that the $(d-1)$-simplex formed by the vertices of $A$ does not form a crossing with the $(d-1)$-simplex formed by the vertices of $B$ is the following:\\

\noindent \scalebox{0.9}{$A=\{p_{c_{1}}, p'_{c_{2}}, p_{c_{3}}, p'_{c_{4}}, \ldots, p_{c_{d-2}}, p'_{c_{d-1}}, p_{c_{d}} \}, B=\{p'_{c_{1}}, p_{c_{2}}, p'_{c_{3}}, p_{c_{4}}, \ldots, p'_{c_{d-2}}, p_{c_{d-1}}, p'_{c_{d}} \}$}. \qed
\end{proof}

\noindent\textbf{Proof of Theorem~\ref{thm2}.}
For each $i$ satisfying $1 \leq i \leq d$, let $C_i$ denote the $i^{th}$ partition of the vertex set of $K^d_{d \times n}$.
Let $\{p^i_1, p^i_2, \ldots, p^i_n \}$ denote the set of $n$ vertices in $C_i$.
Consider the following arrangement of the vertices of $K^d_{d \times n}$ on the $d$-dimensional moment curve:

\begin{itemize}
\item Any vertex of $C_i$ precedes any vertex of $C_j$ if $i < j$.
\item For each $i$ satisfying $1 \leq i \leq d$, $p^i_l \prec p^i_m$ if $l < m$.
\end{itemize}
Consider any induced sub-hypergraph of $K^d_{d \times n}$ which is isomorphic to $K^d_{d \times 2}$.
In this particular $d$-dimensional rectilinear drawing of $K^d_{d \times n}$, the vertices of the sub-hypergraph follow the same ordering mentioned in the proof of Lemma~\ref{lem:1}, implying that each of them contains $2^{d-1}-1$ crossing pairs of hyperedges and the maximum $d$-dimensional rectilinear crossing number of $K^d_{d \times n}$ is $(2^{d-1}-1){\dbinom{n}{2}}^d$. \qed

\section{On the Maximum Rectilinear Crossing Number of General Hypergraphs}\label{sec5}

In this section, we turn our focus on finding the maximum $d$-dimensional rectilinear crossing number of an arbitrary $d$-uniform hypergraph $H$. 
Given $H$ and an integer $l$, we show that finding the existence of a $d$-dimensional rectilinear drawing $D$ of $H$ having at least $l$ crossing pairs of hyperedges is NP-hard. \par

We reduce the Max E$_k$-set splitting problem, which is known to be NP-hard, to our problem. 
Given a $k$-uniform hypergraph $H'=(V',E')$ and an integer $c$, the decision version of Max E$_k$-set splitting asks whether there exists a partition of $V'$ in two parts such that at least $c$ hyperedges of $E'$ contain at least one vertex from both the parts.
Lov{\'a}sz~\cite{LOV} proved that given a $k$-uniform hypergraph $H'=(V',E')$, deciding whether $H'$ is $2$-colorable is NP-hard when $k \geq 3$. 
For $k \geq 3$, this problem is a special case of the decision version of the Max  E$_k$-set splitting where $c=|E'|$. 
This implies that for $k \geq 3$, the decision version of the Max  E$_k$-set splitting problem is also NP-hard.  
Note that the Max  E$_k$-set splitting problem is the same as the Max-Cut problem for $k=2$.
It has been extensively studied in the literature and is known to be NP-hard.\\

\noindent\textbf{Proof of Theorem~\ref{thm3}.}
We are given a $d$-uniform hypergraph $H=(V,E)$ and a constant integer $c'$. 
We create a $d$-uniform hypergraph $\tilde{H}= (\tilde{V}, \tilde{E})$, where 

\begin{enumerate}
 
 \item
 $\tilde{V} = V \cup \{v'_0,v'_1,v'_2, \ldots, v'_{t(d-1)} \}$ where $t = \dbinom{|E|}{2}+1$. 
 
 \item
 $\tilde{E} = E \, \cup \, \{e_i \mid 1 \leq i \leq t \}$ where \scalebox{0.85}{$e_i= \{v'_0, v'_{(i-1)(d-1)+1}, v'_{(i-1)(d-1)+2}, \ldots, v'_{(i-1)(d-1)+(d-1)}\}$}.
\end{enumerate}

We prove that $\tilde{H}$ has a $d$-dimensional rectilinear drawing $D$ having at least $tc'$ crossing pairs of hyperedges if and only if there exists a partition of $V$ in two parts such that at least $c'$ hyperedges of $E$ contains at least one vertex from both  parts.\par

Let us assume that there exists a partition of $V$ comprising of two parts $V_1$ and $V_2$ such that (at least) $c'$ hyperedges of $E$ contain at least one vertex from both  parts. 
Let us denote these hyperedges as cut-hyperedges. 
We produce a drawing $D$ of $\tilde{H}$ having at least $tc'$ crossing pairs of hyperedges.\par

Let $h$ be a $(d-1)$-dimensional hyperplane. 
We place the points corresponding to the vertices in $V_1$ and the points corresponding to the vertices in $V_2$ in general position in $\mathbb{R}^d$ such that they lie on the different open half-spaces created by $h$.
The hyperedges in $E$ are drawn as the $(d-1)$-simplices spanned by the $d$ points corresponding to its vertices.
Note that each of the cut-hyperedges has a non-trivial intersection with $h$.
We then create the $t$ hyperedges $e_1,e_2, \ldots, e_t$.
Note that these $t$ hyperedges cannot form a crossing with each other since each of them contains a common vertex, i.e., $v'_0$.\par

We put the $d$ vertices $\{v'_0, v'_1,v'_2, \ldots, v'_{d-1}\}$ of $e_1$ on $h$ such that they are in general position with the rest of the points in $\mathbb{R}^d$ and the convex hull of these $d$ points crosses each of the cut-hyperedges.
Note that it is always possible to create such a placement of points since there are only a finite number of cut-hyperedges. 
Note that the position of the vertex $v'_0$ is fixed after the placement of the vertices of $e_1$. 
We then add the other $d-1$ vertices of $e_2$ very close to the $d-1$ vertices of $e_1$ such that they, along with the other vertices, maintain the general position and the $(d-1)$-simplex corresponding to the hyperedge $e_2$ crosses each of the cut-hyperedges. 
Following the same methodology, we keep on adding the vertices of each $e_i$ in a very close neighborhood of each other such that they do not violate the general position assumption and each $(d-1)$-simplex corresponding to each $e_i$ crosses the same number of cut-hyperedges. \par

Note that in this $d$-dimensional rectilinear drawing $D$ of $\tilde{H}$ (as depicted in Figure~\ref{fig:3}), each of the cut-hyperedges forms a crossing  with $e_i$ for each $1 \le i \le t$.
This implies that there exist at least $tc'$ crossing pairs of hyperedges in $D$.

\begin{figure}[!ht]
  \centering
  \includegraphics[scale = 0.5]{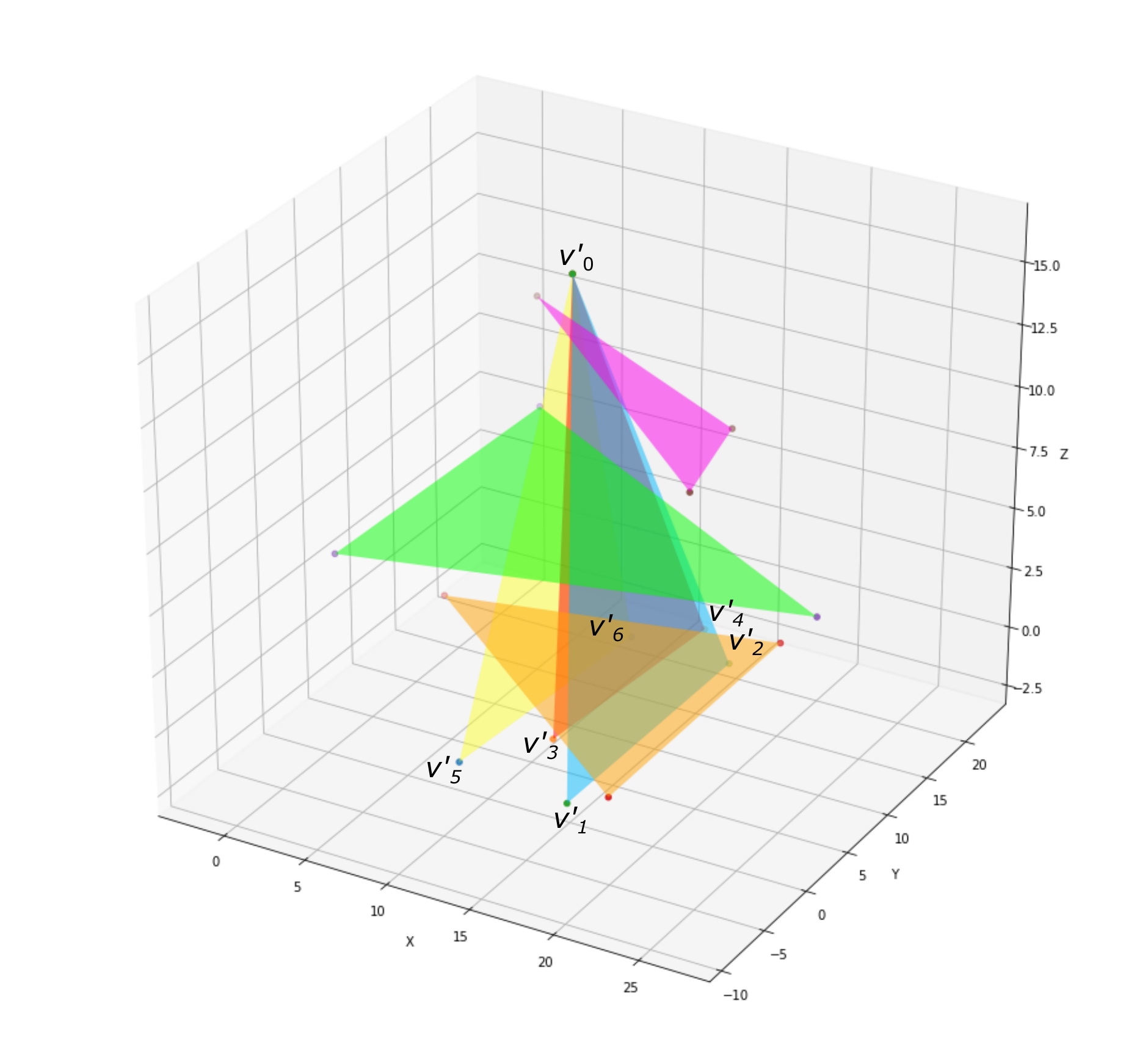}
  \caption{$3$-dimensional Rectilinear Drawing of a $3$-uniform hypergraph}
  \label{fig:3}
\end{figure}

On the other hand, let us assume that $\tilde{H}$ has a $d$-dimensional rectilinear drawing $D$ having at least $tc'$ crossing pairs of hyperedges. 
Suppose each $e_i$ crosses at most $(c'-1)$ hyperedges of $E$. 
Then, the maximum number of crossing pairs of hyperedges in $D$ is $(c'-1)t+\dbinom{|E|}{2} < (c'-1)t+t=c't$. \par

This implies that one of the $e_i$ must \emph{cross} at least $c'$ hyperedges of $E$.
Without loss of generality, suppose that $e_1$ crosses at least $c'$ hyperedges of $E$. 
Consider the hyperplane $h'$ spanned the $d$ vertices of $e_1$, i.e., the affine hull of the points $\{v'_0, v'_1,v'_2, \ldots, v'_{d-1}\}$. \par

The partition of $V$ created by $h'$ shows that there exists a partition of $V$ comprising of two parts $V_1$ and $V_2$ such that (at least) $c'$ hyperedges of $E$ contain at least one vertex from both the parts. \qed

\vspace{0.3cm}

\noindent\textbf{Proof of Theorem~\ref{lemrandom}.}
Pick a permutation uniformly at random of the vertices of $H$. 
Put the vertices on the $d$-dimensional moment curve in that order. 
We draw each hyperedge present in $E$ as a $(d-1)$-simplex formed by the corresponding vertices. 
Let this drawing of $H$ be denoted by $D'$. 
Let $X$ denote the number of crossing pairs of hyperedges in $D'$. 
Let $A'$ and $B'$ be a pair of vertex-disjoint hyperedges. 
Let $X_{A',B'}$ denote the indicator random variable, where 
$X_{A',B'}$ is $1$ if $A'$ and $B'$ form a crossing pairs of hyperedges, else it is set to $0$.\par

Note that the $2d$ vertices  can be placed on the $d$-dimensional moment curve in $c_d^m$ ways such that the $(d-1)$-simplex formed by the vertices of $A'$ and the $(d-1)$-simplex formed by the vertices of $B'$ form a crossing.
Note that we can permute $\{A', B'\}$ in two ways to obtain distinct ordered pairs, i.e., $(A', B')$, and $(B', A')$.
Also, note that vertices of $A'$ have $d!$ permutations among themselves. 
Similarly, vertices of $B'$ also have $d!$ permutations.\\

This implies that $Pr[X_{A',B'}=1]= \dfrac{2(d!)^2c_d^m}{(2d)!}=\dfrac{c_d^m}{\dbinom{2d-1}{d-1}}~.$
      
\noindent The expected number of crossing pairs of hyperedges in $D'$ is $\mathbb{E}(X)= \mathbb{E}(\sum_{\{A',B'\}}X_{A',B'})=\dfrac{c_d^m}{\dbinom{2d-1}{d-1}}\cdot F.$ Thus, there exists a random ordering of the vertices of $H$ over the $d$-dimensional moment curve which produces at least $\tilde{c_d}\cdot F$ crossing pairs of hyperedges. \qed

\noindent Note that $\tilde{c_d}$ is a constant.
Table~\ref{tab:1} contains the value of $\tilde{c_d}$ for $2 \leq d \leq 10$.

%

\begin{table}[!ht]

\begin{center}
\centering
\caption{Values of $\tilde{c_d}$}

\begin{tabular}{cc}
\hline
\multicolumn{1}{|c}{$d$} & \multicolumn{1}{c|}{ $\tilde{c_d}$}       \\ \hline
\multicolumn{1}{|c}{$ 2$} & \multicolumn{1}{c|}{1/3}    \\ 
\multicolumn{1}{|c}{$3$} & \multicolumn{1}{c|}{3/10}   \\ 
\multicolumn{1}{|c}{$4$} & \multicolumn{1}{c|}{13/35}   \\
\multicolumn{1}{|c}{$5$} & \multicolumn{1}{c|}{5/14}   \\ 
\multicolumn{1}{|c}{$ 6$} & \multicolumn{1}{c|}{181/462}  \\ 
\multicolumn{1}{|c}{$ 7$} & \multicolumn{1}{c|}{329/858}  \\ 
\multicolumn{1}{|c}{$ 8$} & \multicolumn{1}{c|}{521/1287} \\ 
\multicolumn{1}{|c}{$ 9$} & \multicolumn{1}{c|}{1941/4862} \\
\multicolumn{1}{|c}{$ 10$}& \multicolumn{1}{c|}{38251/92378}\\ \hline
\end{tabular}
\label{tab:1}
\end{center}
\end{table}


\section{Discussion}

In this paper, we have proved the conjecture of Anshu et al.~\cite{AGS}
for $d=4$ by proving that \emph{max}-
$\overline {cr}_4(K_{n}^4)= 13 \dbinom{n}{8}$.
The conjecture remains open for $d>4$.
Consider any $d$-dimensional neighborly polytope $P$ whose vertices are in general
position in $\mathbb{R}^d$. 
Since the vertices are in general position, $P$ is a $d$-dimensional simplicial neighborly polytope. The polytope
$P$ has the same $f$-vectors as the $d$-dimensional cyclic polytope~\cite{ZIE}.\par

Since $P$ is a $d$-dimensional simplicial  neighborly polytope, the number of crossing pairs of $\floor{d/2}$-simplex and $\ceil{d/2}$-simplex in $P$ is same as the number of crossing pairs of $\floor{d/2}$-simplex and $\ceil{d/2}$-simplex in the $d$-dimensional cyclic polytope.
Due to this similarity with the $d$-dimensional cyclic polytope, we believe that the number of crossing pairs of hyperedges is equal to $c_d^m$ if all the vertices of $K_{n}^d$ are placed  at the vertices of $P$.\par

We  conjecture that among all $d$-dimensional rectilinear drawings of
$K_{n}^d$, the number of crossing pairs of hyperedges gets maximized
if and only if all the vertices of $K_{n}^d$ are placed in general position in
$\mathbb{R}^d$ at the vertices of a $d$-dimensional neighborly polytope
(whose vertices are in general position).\par

Further, we want to ask a more general question in this area.
Consider a $d$-dimensional convex drawing of complete $d$-uniform
hypergraph having $2d$ vertices.
Note that  in a $d$-dimensional
convex drawing  of $K_{2d}^d$, its vertices are placed as vertices of a $d$-dimensional convex polytope.
As our results indicate, the $d$-dimensional convex polytopes with maximum number of
facets also maximize the number of crossing pairs of hyperedges. 
It is an interesting problem to find out the relation between the number of
crossing pairs of hyperedges in a $d$-dimensional convex drawing of
$K_{2d}^d$ and the number of facets of the corresponding polytope. 
Guy~\cite{GUY} noted that in a rectilinear drawing of a complete graph,
the number of crossing pairs of edges is minimum when the convex hull of
its vertices forms a triangle. 
Aichholzer et al.~\cite{AGOR} proved this claim rigorously using continuous
motion of the vertices. 
It is a nice problem to prove that the convex hull of the vertices of $K_n^d$
in a $d$-dimensional rectilinear drawing of it is a $d$-simplex if the number
of crossing pair of hyperedges is minimum. \par

Goodman and Pollack~\cite{GP} proved that the lower bound on the realizable order types of $n$ points in general position in $\mathbb{R}^d$ is roughly in the order of $n^{d^2n+O(n/ \log n)}$. 
This shows that it is hard to extend our proof of Theorem~\ref{thm1} for higher dimensions. It is interesting to provide a traditional proof for  Theorem~\ref{thm1}. 
Such a proof can also be useful for proving the general conjecture. \par

Theorem~\ref{lemrandom} shows that there is a randomized approximation
algorithm which in expectation provides a $\tilde{c_d}$ guarantee on
the maximum $d$-dimensional rectilinear crossing number problem. 
It is an interesting open problem to derandomize the algorithm.
For $d=2$, Bald et al.~\cite{BLD} derandomized the algorithm. 
Note that $\tilde{c_d}$ is a constant for a given $d$. 
It is easy to observe that $\tilde{c_d}$ is bounded by $1/2$ from above. 
It would be good to give a lower bound on $\tilde{c_d}$. 
Our guess is $\tilde{c_d} \geq 3/10$.


\section*{Acknowledgements}

Some parts of the Research were  conducted while Rahul Gangopadhyay were at IIIT-Delhi and Saint-Petersburg State University.\footnote{Some parts
of this work were presented at EuroCG 2020 and EuroCG 2022.}
Rahul Gangopadhyay was   supported by Ministry of Science and Higher Education of the Russian Federation, agreement no. 075-15-2022-287. Authors are deeply grateful to Dr. Gaiane Panina for the help in the proof of Theorem~\ref{thm0}. 
\section*{Statements and Declarations}
The Authors declare that there is no conflict of interest.

\end{document}